\newextarrow{\xbigtoto}{{20}{20}{20}{20}}
   {\bigRelbar\bigRelbar{\bigtwoarrowsleft\rightarrow\rightarrow}}
\newextarrow{\xbigto}{{20}{20}}
   {\bigRelbar{\bigtwoarrowsleft\rightarrow}}
\newtheorem{theorem}{Theorem}
\newtheorem{definition}[theorem]{Definition}
\newtheorem{proposition}[theorem]{Proposition}
\newtheorem{corollary}[theorem]{Corollary}
\title{Characterization of Contextuality with Semi-Module Čech Cohomology and its Relation with Cohomology of Effect Algebras}
\author{Sidiney B. Montanhano 
\institute{Departamento de Matemática Aplicada\\
  Instituto de Matemática, Estatística e Computação Científica\\
  Universidade Estadual de Campinas\\
  Campinas, São Paulo, Brazil}
  \email{\texttt{\href{mailto:s226010@dac.unicamp.br}{s226010@dac.unicamp.br}}}}
\begin{document}
\maketitle
\hypersetup{urlcolor=magenta}

\begin{abstract}
I present a generalized notion of obstruction in Čech cohomology on semi-modules, which allows one to characterize non-disturbing contextual behaviors with any semi-field. This framework generalizes the usual Čech cohomology used in the sheaf approach to contextuality, avoiding false positives. I revise a similar work done in the framework of effect algebras with cyclic and order cohomologies and explore the relationship with the one presented here.
\end{abstract}

\section{Introduction}

With the codification of contextuality in the presheaf formalism \cite{Abramsky_2011}, one can use cohomological structures to explore possible detection of contextual behaviour \cite{Abramsky_2012,abramsky_et_al:LIPIcs:2015:5416,Aasn_ss_2020}. But the use of Abelian groups is not justified by the empirical model structure, and violations of this characterization occur \cite{Car__2017,caru2018towards}. To avoid these violations, a framework based on the algebraic structure of empirical models is necessary, implying in the study of cohomology on semi-modules \cite{patchkoria2006exactness,Jun_2017}. For the characterization, however, we need to use the notion of obstruction, which is absent in this generalization of cohomology. In this paper, I introduce a generalized notion of obstruction to codify the data of the difference between contexts and achieve the characterization of contextual behavior with this generalized notion of cohomology.

Once the result is presented, which follows in analogy with the previous use of cohomology for identification of contextual behavior, usually in the logical sense, I revise the formalism of effect algebras and the use of cohomological tools to identify contextuality \cite{Roumen_2017}. The relation between effect algebras and empirical models is close, as one can see in Ref. \cite{10.1007/978-3-662-47666-6_32}, and can be traced to the relation between different notions of contextuality \cite{Wester_2018}. I show that both cohomological frameworks, even looking so different in their constructions, look for the description of the model as a Boolean structure.

This paper is divided as follows. In section \ref{Presheaf}, it's given a revision of contextuality and the presheaf approach. In section \ref{Čech} the cohomology for empirical models and some already known results from the literature are presented. Section \ref{What} uses the categorical structure to show how the usual cohomology forgets data from the empirical model, enabling violations, and examples of it are given. Section \ref{semi-modules} is a naive revision on cohomology using semi-modules instead of groups. In section \ref{Obstruction} the description of the generalized obstruction and how it characterizes $R$-contextual behavior is obtained, presenting the main result of this work. Section \ref{Effect} revise the effect algebra approach and the cohomologies used to detect contextual behavior, and its relation with the framework presented in this paper is explored. Section \ref{Commentaries} presents revised examples and some commentaries about cohomological contextuality and other methods to find contextual behavior, and in section \ref{Conclusion} the conclusions and future paths are given.

\section{Presheaf approach}
\label{Presheaf}

Contextuality can be informally understood as the impossibility of a global description of local parts of a whole. A framework to formalize this idea is the sheaf theory, more specifically the presheaf structure. 

\begin{definition}
A presheaf is a functor $F:C^{op}\to \textbf{Set}$ of a category $C$ to the category of sets. Let $C$ be a site, a small category equipped with a coverage $J$, in other words any object $U\in C$ admits a collection of families of morphisms ${f_{i}:U_{i}\to U}_{i\in I}$ called covering families. A presheaf on $(C,J)$ is a sheaf if it satisfies the following axioms
\begin{itemize}
    \item Gluing: if for all $i\in I$ we have $s_{i}\in F(U_{i})$ such that $s_{i}|_{U_{i}\cap U_{j}}=s_{j}|_{U_{i}\cap U_{j}}$, then there is $s\in F(U)$ satisfying $s_{i}=s|_{U_{i}}$;
    \item Locality: if $s,t\in F(U)$ such that $s|_{U_{i}}=t|_{U_{i}}$ for all $U_{i}$, then $s=t$.
\end{itemize}
\end{definition}

For a presheaf, one of the axioms that define a sheaf fails. And this is what characterizes the sheaf approach to contextuality. Let's define the structure of this approach.

\subsubsection*{Measurement scenario}

The sheaf approach to contextuality has as fundamental objects measurements, organized as a covering through compatibility. To the covering of measurements and the possible events, we give the name measurement scenario \cite{Abramsky_2018}.

\begin{definition}
A measurement scenario $\left<X,\mathcal{U},(O_{x})_{x\in X}\right>$ is a hypergraph $\left<X,\mathcal{U}\right>$, that will be called a scenario, plus some additional structure, usually enough to identify it as a simplicial complex\footnote{See Ref. \cite{Sidiney_2021} for a justified construction of the measurement scenario.}. $X$ is the set of measurements, and $\mathcal{U}$ a covering of contexts, a family of sets of compatible measurements\footnote{A formal definition of compatibility can be found in Ref. \cite{Guerini_2017}. A set of measurements $\mathcal{A}$ is compatible if there is a measurement that can recover all the measurements of $\mathcal{A}$ with marginalization and classical post-processing.}. The sets $(O_{x})_{x\in X}$ for each  $x\in X$ are called outcome sets, with their elements the possible events of each measurement.
\end{definition}

In this paper, we will suppose that outcome sets are finite, and therefore one can define an outcome set $O$ for all the measurements $x$, and codify any $O_{x}$ through an injective function $f_{x}:O_{x}\to O$. We just need to ignore elements that aren't in the image of $f_{x}$, such that these elements aren't in the measure's support. We will work with the set of events of contexts $(O_{U})_{U\in\mathcal{U}}$, and this construction will be implied. We will also work with a measurement scenario with a simplicial complex structure of contexts.

\subsubsection*{Sheaf of events}

We can formalize the structure of events as a sheaf. First, the covering $\mathcal{U}$ will be restricted here to maximal contexts, that will be denoted also by $\mathcal{U}$, so that $\left<X,\mathcal{U}\right>$ can be understood as a set $X$ with a covering $\mathcal{U}$ of maximal contexts $U_{j\in I}$ indexed by an ordered set $I$ \footnote{Given a covering, one can construct with unions and intersections a locale, a "pointless space". It means that the measurements aren't the fundamental objects, turning the minimal contexts into the "effective" measurements of the scenario, which depend on how one chooses the covering. A physical example of refinement is spin degeneration, where refinement occurs by applying a suitable magnetic field.}. As the intersection of contexts is a context, we can define the inclusion morphism $\rho(jk,j): U_{j}\cap U_{k}\to U_{j}$, which turns the set of contexts and the inclusion morphisms in a small category.\footnote{From Ref. \cite{Johnstone:592033} we can see that the category of contexts with the inclusion is a site.} Therefore we can understand a scenario as a small category.

\begin{definition}
The outcome sets are defined by a functor $\mathcal{E}:\left<X,\mathcal{U}\right>^{op}\to\textbf{Set}$, with $\mathcal{E}::U\mapsto O^{U}$ and $\mathcal{E}::\rho\mapsto\rho'$, such that for each element $U\in\mathcal{U}$ we have an outcome set $O^{U}$ of the context and $\rho'$ the restriction to the outcome sets.
\end{definition}

These outcome spaces are by definition the tensor product $O^{U}=\bigotimes_{x\in U}O_{x}$, representing all the combined outcomes. The morphisms of the  $\left<X,\mathcal{U}\right>$ are reversed as restrictions $\rho(j,kj):U_{j}\to U_{j}\cap U_{k}$ where the intersection $U_{j}\cap U_{k}$ is denoted by the indices $jk$, and they are mapped to $\rho'(j,kj):O^{j}\to O^{kj}::s_{j}\mapsto s_{j}|_{kj}$. Elements $s\in O^{U}$ of the image of a sheaf, and of a presheaf, are called local sections if $U\neq X$, and global sections if $U=X$. 

A compatible family is a family of sections $\left\{s_{i}\in\mathcal{E}(i)\right\}_{i\in I}$ such that for all $j$ and $k$ holds $\rho'(j,jk)(s_{j})=\rho'(k,jk)(s_{k})\in\mathcal{E}(jk)$. The Gluing axiom says that the existence of a compatible family implies the existence of a global section $s\in\mathcal{E}(X)$ that satisfies $s|_{j}=s_{j}$ for all $j$. The locality axiom says that if two global sections agree on all the elements of the covering, then they are the same. For the outcomes, these axioms can be interpreted as the independence and uniqueness of events concerning contexts, and we will assume they hold.

\begin{proposition}
By independence and uniqueness of events, the functor $\mathcal{E}$ is a sheaf in the site of measurements and contexts, called the sheaf of events of a given measurement scenario.
\end{proposition}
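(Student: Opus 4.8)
The plan is to verify directly that $\mathcal{E}$ satisfies the two sheaf axioms with respect to the coverage on the site of contexts, reading the assumed independence and uniqueness of events as precisely Gluing and Locality. First I would unwind the definition of $\mathcal{E}$: since the tensor product in $\textbf{Set}$ is the Cartesian product, a section $s\in\mathcal{E}(U)=O^{U}=\prod_{x\in U}O_{x}$ is nothing but an assignment $x\mapsto s(x)\in O_{x}$ of an outcome to each measurement of the context $U$, and the morphism $\rho'$ acts by genuine restriction of the domain, $s|_{jk}(x)=s(x)$ for $x\in U_{j}\cap U_{k}$. With this identification I would record that $\mathcal{E}$ is a well-defined functor, as a sanity check: restriction along $U\hookrightarrow U$ is the identity and composing two restrictions agrees with restricting directly, both immediate because a restriction of a restriction of an assignment is again a restriction.

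Next I would establish Locality from the uniqueness of events. Let $\{U_{i}\}_{i\in I}$ be a covering family of a context $U$, so that $\bigcup_{i}U_{i}=U$ as sets of measurements, and suppose $s,t\in\mathcal{E}(U)$ satisfy $s|_{U_{i}}=t|_{U_{i}}$ for every $i$. Any measurement $x\in U$ lies in some $U_{i}$, whence $s(x)=s|_{U_{i}}(x)=t|_{U_{i}}(x)=t(x)$; as $x$ was arbitrary, $s=t$. Then I would establish Gluing from the independence of events. Given a compatible family $\{s_{i}\in\mathcal{E}(U_{i})\}_{i\in I}$, meaning $\rho'(i,ij)(s_{i})=\rho'(j,ij)(s_{j})$ on every intersection $U_{i}\cap U_{j}$, I define the candidate amalgamation $s\colon U\to O$ by $s(x)=s_{i}(x)$ for any $i$ with $x\in U_{i}$. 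The only content is well-definedness: if $x\in U_{i}\cap U_{j}$ then the compatibility condition gives $s_{i}(x)=s|_{ij}(x)=s_{j}(x)$, so the value is independent of the chosen index; since the $U_{i}$ cover $U$, the assignment $s$ is defined on all of $U$, hence $s\in\mathcal{E}(U)$, and by construction $s|_{U_{i}}=s_{i}$.

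The step I expect to be the main obstacle is not any single computation but making the site-theoretic bookkeeping precise: one must confirm that the coverage $J$ really consists of families whose measurement sets union to the covered context (so that the pointwise definition of $s$ exhausts $U$), and that on this poset-style site agreement on the pairwise intersections $U_{i}\cap U_{j}$ is already enough, with no higher triple-overlap condition entering the sheaf axiom itself. The latter holds precisely because sections here are plain assignments of outcomes rather than objects carrying their own gluing data, so amalgamation of assignments that agree pairwise is automatic. I would close by remarking that this is exactly what distinguishes the sheaf of events from the richer empirical-model structure of the later sections, where the cohomological obstructions genuinely arise.
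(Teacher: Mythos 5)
Your proposal is correct and follows essentially the same route as the paper, which simply identifies the assumed independence and uniqueness of events with the Gluing and Locality axioms for $\mathcal{E}(U)=\prod_{x\in U}O_{x}$; you merely make explicit the pointwise verification (sections as outcome assignments, restriction as honest restriction of functions) that the paper leaves implicit.
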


\subsubsection*{$R$-empirical model}

$R$-empirical models are defined usually with a semi-ring $R$, such as the Boolean semi-ring $\mathbb{B}$, the reals $\mathbb{R}$, or the probability semi-ring $\mathbb{R}^{+}$. The choice of an $R$ defines a way to probe the model. 

\begin{definition}
A generic semi-ring is a set $R$ equipped with two binary operations $+$ and $\cdot$ such that $(R,+)$ is a commutative monoid with $0_{R}$:
\begin{itemize}
    \item $(a+b)+c=a+(b+c)$
    \item $0_{R}+a=a+0_{R}=a$
    \item $a+b=b+a$
\end{itemize}
$(R,\cdot)$ is a monoid with $1_{R}$:
\begin{itemize}
    \item $(a\cdot b)\cdot c=a\cdot (b\cdot c)$
    \item $a\cdot 1_{R}=1_{R}\cdot a=a$
\end{itemize}
the multiplication distributes:
\begin{itemize}
    \item $a\cdot (b+c)=a\cdot b +a\cdot c$
    \item $(a+b)\cdot c=a\cdot c+b\cdot c$
\end{itemize}
and $0_{R}$ annihilates:
\begin{itemize}
    \item $a\cdot 0_{R}=0_{R}\cdot a=0_{R}$.
\end{itemize}
\end{definition}

To define $R$-empirical models, we use another functor $\mathcal{D}_{R}:\textbf{Set}\to\textbf{Set}::O^{U}\mapsto\left\{\mu^{O^{U}}_{R}\right\}$, taking a set of local events to the set $R$-measures defined on it $\mu^{O^{U}}_{p}:\mathbb{P}\left(O^{U}\right)\to R$ that satisfies $\mu^{O^{U}}_{R}(O^{U})=1_{R}$, in analogy with probabilistic measure. We will denote by $\mu_{p}::U\in\mathcal{U}\mapsto\mu_{p}^{O^{U}}$ a set of $R$-measures defined in each element of $\mathcal{U}$, and call it a state. In the morphisms $\mathcal{D}_{R}::\rho'(j,kj)\mapsto \rho''(j,kj)$, with $\rho''(j,kj)::\mu^{O^{j}}_{R}\mapsto\mu^{O^{j}|_{kj}}_{R}=\mu^{O^{j}}_{R}|_{kj}$ the marginalization of the $R$-measure $j$ on the intersection $kj$. 

\begin{definition}
The tuple $(X,\mathcal{U},\mathcal{E},\mu_{R})=e_{R}$ is called an $R$-empirical model over the measurement scenario $\left<X,\mathcal{U},(O_{x})_{x\in X}\right>=\left(\left<X,\mathcal{U}\right>,\mathcal{E}\right)$ given by the state $\mu_{R}$, defining a set of local sections $\left\{\mu_{R}^{O^{U}}\in\mathcal{D}_{R}\mathcal{E}(U);U\in\mathcal{U}\right\}$.
\end{definition}

\subsubsection*{Non-disturbance}

The non-disturbance condition is a usual condition imposed in an $R$-empirical model, sometimes implicitly. It says that $\mu^{O^{j}}_{R}|_{kj}=\mu^{O^{k}}_{R}|_{kj}$ for all $k$ and $j$, which means there is the local agreement between contexts. This condition is equivalent to the existence of a compatible family to $\mathcal{D}_{R}\mathcal{E}$, but it doesn't imply in $\mathcal{D}_{R}\mathcal{E}$ to be a sheaf. Once we can only have access to contexts, it is possible to define the functor $\mathcal{D}_{R}\mathcal{E}$ through a state that can't be extended to a measure in the global events.

Non-disturbance is equivalent to the notion of parameter-independence, as explained in Ref. \cite{barbosa2019continuousvariable}, a property that if violated means the existence of non-trivial data between contexts \cite{Sidiney_2021}. As said in Ref. \cite{Dzhafarov_2018}, where disturbance is called inconsistent connectedness: "Intuitively, inconsistent connectedness is a manifestation of direct causal action of experimental set-up upon the variables measured in it". In this paper, we will work with non-disturbing models.

\subsubsection*{Contextuality}

Contextuality is the impossibility of describe a given $R$-empirical model in classical terms, but one must first define which classical notion to use. We will call it $R$-contextuality to make explicit the chosen semi-ring. First we know that any measure can be described as the marginalization of another one
\begin{equation}
    \mu_{R}^{O^{U}}(A)=\sum_{\lambda\in\Lambda}k\left(\lambda,A\right),
\end{equation}
for all $A\in\mathbb{P}(O^{U})$, $k:\Lambda\times\mathbb{P}(O^{U})\to R$ being a $R$-measure that satisfies $\sum_{\lambda\in\Lambda}k\left(\lambda,O^{U}\right)=1_{R}$. In literature of contextuality and non-locality $\Lambda$ is called the set of hidden variables, which is statistically taken into account but is empirical inaccessible. 

To impose a classical behavior, the hidden variables must be independent of the contexts, a property called $\lambda$-independence\footnote{$\lambda$-independence is related with the concept of free choice in non-locality \cite{Cavalcanti_2018,Abramsky_2014}. It can be understood as a dependence of the hidden variables, sometimes called ontic variables, in the contexts. Such dependence can storage contextuality, as free choice can be understand as storage of non-locality \cite{Blasiak_2021}. For more details of classification of hidden variables, in the subject of non-locality, see Ref. \cite{Brandenburger_2008}.}. To reflect such a behavior of independence, our model must show independence between measurements, in other words be factorizable. Such independence allow to write
\begin{equation}
    \mu_{R}^{O^{U}}(A)=\sum_{\Lambda}p\left(\lambda,A\right)\prod_{x\in U}\mu_{R}^{O^{x}}(\rho'(U,x)(A)),
\end{equation}
with the auxiliary of the set of hidden variables $\Lambda$ being statistically taken into account by a measure $p:\Lambda\to R$ satisfying $p(\Lambda)=1_{R}$. Summing up it with $\lambda$-independence implies
\begin{equation}
    \mu_{R}^{O^{U}}(A)=\sum_{\Lambda}p\left(\lambda\right)\prod_{x\in U}\mu_{R}^{O^{x}}(\rho'(U,x)(A)),
\label{Contextualidade}
\end{equation}
closing the representation of a $R$-empirical model as a classical system.

\begin{definition}
An $R$-empirical model is said $R$-non-contextual if there is a $R$-measure $p$ and a set of hidden variables $\Lambda$ such that Equation \ref{Contextualidade} holds for all $U\in\mathcal{U}$.
\end{definition}

Another property we can impose is called outcome-determinism, the property of logically distinguish between outcomes. In combination with non-disturbance, we get the following result \cite{Abramsky_2011,Abramsky_2017}:

\begin{proposition}
A non-disturbing $R$-empirical model that satisfies the outcome determinism condition has as its hidden variables exactly its global events.
\end{proposition}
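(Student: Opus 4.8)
The plan is to establish a correspondence between the canonical hidden-variable set $\Lambda$ of such a model and the global sections $\mathcal{E}(X)=O^X$, and to show this correspondence is forced once outcome-determinism and non-disturbance are assumed. First I would unpack outcome-determinism: the ability to logically distinguish outcomes means that, conditioned on a hidden variable $\lambda$, each single-measurement $R$-measure $\mu_R^{O^x}(\cdot\mid\lambda)$ is two-valued, assigning $1_R$ to exactly one outcome $o_\lambda(x)\in O_x$ and $0_R$ to every other. Substituting into the factorized, $\lambda$-independent form of Equation \ref{Contextualidade}, the product $\prod_{x\in U}\mu_R^{O^x}(\rho'(U,x)(A))$ collapses — using $1_R\cdot 1_R=1_R$ and the annihilation $a\cdot 0_R=0_R$ — to the indicator of whether $A$ contains the single joint outcome obtained by reading off $o_\lambda(x)$ at each $x\in U$. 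Thus each $\lambda$ contributes a point-mass state, the Dirac measure at the local assignment $s_\lambda|_U:=(o_\lambda(x))_{x\in U}\in O^U$.

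The key step is to promote the family of local assignments $\{s_\lambda|_U\}_{U\in\mathcal{U}}$ to a genuine global event. Here I would invoke non-disturbance together with the sheaf structure of $\mathcal{E}$. Non-disturbance gives $\mu_R^{O^j}|_{kj}=\mu_R^{O^k}|_{kj}$; since the conditional states are point masses, their agreement on each intersection $U_j\cap U_k$ forces $o_\lambda$ to take the same value on overlapping measurements, so $\{s_\lambda|_U\}$ is a compatible family in the sense defined above. The Gluing axiom of the sheaf $\mathcal{E}$ then produces a global section $s_\lambda\in\mathcal{E}(X)=O^X$ with $s_\lambda|_U=(o_\lambda(x))_{x\in U}$, and the Locality axiom makes it unique. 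This furnishes a well-defined map $\lambda\mapsto s_\lambda$ from hidden variables to global events. For the converse I would run the construction backwards: any global event $s\in\mathcal{E}(X)$ defines a deterministic hidden variable by taking, on each context, the point mass $\delta_{s|_U}$; functoriality of the restriction $\rho'$ guarantees these marginalize consistently, so non-disturbance holds automatically, and the resulting model is factorizable and $\lambda$-independent by construction.

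Combining the two directions identifies the effective hidden-variable set with $\mathcal{E}(X)$: distinct $\lambda$ that induce the same $s_\lambda$ are statistically indistinguishable and may be merged, while every global section is realized, so up to this identification $\Lambda=O^X$. I expect the main obstacle to be precisely the gluing step — arguing that the per-measurement deterministic outcomes assigned by a single $\lambda$ are globally consistent rather than merely locally defined. This is exactly where non-disturbance is indispensable: without it a hidden variable could dictate incompatible outcomes on overlapping contexts, the family $\{s_\lambda|_U\}$ would fail to be compatible, and no global event would exist to serve as its label.
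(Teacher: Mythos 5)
The paper does not actually prove this proposition; it is stated as a known consequence of the Fine--Abramsky--Brandenburger analysis and cited to \cite{Abramsky_2011,Abramsky_2017}. Your overall strategy --- outcome determinism turns each conditional state into a point mass, each $\lambda$ therefore labels a definite assignment of outcomes, and the hidden-variable set may be identified with $\mathcal{E}(X)=O^{X}$ after merging statistically indistinguishable labels --- is the standard argument and reaches the right conclusion.

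There is, however, one genuinely invalid inference at the step you yourself flag as the main obstacle. You argue that non-disturbance, $\mu_{R}^{O^{j}}|_{kj}=\mu_{R}^{O^{k}}|_{kj}$, forces the per-$\lambda$ point masses to agree on overlaps. It does not: non-disturbance constrains only the \emph{mixtures} $\sum_{\lambda}p(\lambda)\,\delta_{s_{\lambda}|_{U}}$, and two different families of point masses can have identical mixtures on an intersection (e.g.\ two equally weighted $\lambda$'s whose assignments on the overlap are swapped between the two contexts). The compatibility of $\{s_{\lambda}|_{U}\}_{U\in\mathcal{U}}$ actually comes for free from the form of Equation \ref{Contextualidade}, i.e.\ from factorizability plus $\lambda$-independence: the conditional measure $\mu_{R}^{O^{x}}$ there is indexed by the single measurement $x$ (and $\lambda$) alone, not by the context, so the deterministic value $o_{\lambda}(x)$ cannot depend on which $U\ni x$ one looks at. Consequently $s_{\lambda}=(o_{\lambda}(x))_{x\in X}$ is \emph{already} an element of $O^{X}=\prod_{x\in X}O_{x}$ --- no gluing and no appeal to non-disturbance is needed to build it; the Gluing/Locality axioms of $\mathcal{E}$ merely confirm that its restrictions are the $s_{\lambda}|_{U}$. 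Non-disturbance enters elsewhere: it is the (necessary) consistency condition that makes the empirical model realizable as the family of marginals of a single global $R$-measure supported on $O^{X}$, which is what makes the identification $\Lambda=O^{X}$ with weights $p$ well posed. With that correction your converse direction and the merging of indistinguishable $\lambda$'s go through as written.
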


This result allow to develop measures of contextuality by the use of linear program \cite{Abramsky_2017,barbosa2019continuousvariable}, once we know the set $\Lambda$. With it one can also prove the Fine–Abramsky–Brandenburger Theorem \cite{Abramsky_2011}, where $R$-contextuality can be understood as the nonextendability of a local section to a global section of $\mathcal{D}_{R}\mathcal{E}$, or in other words, as the nonexistence of a global $R$-measure with marginalization to a context $U\in\mathcal{U}$. We can graphically describe the factorization, now understood as an equivalent condition to non-contextual behavior, as the commutation of the diagram
\begin{equation}
    \begin{tikzcd}
\mathcal{E}(\mathcal{U}) \arrow{rr}{\mu_{R}} \arrow{dr}{i'} & & R \\
& \mathcal{E}(X) \arrow{ur}{\mu_{R}} &
\end{tikzcd}
\end{equation}
It is clear the rule of the global events, as defining a global $R$-measure $\mu_{R}^{O^{X}}$, and the commutation giving the realization of the $R$-empirical model by these global events. Here $i'$ is the inclusion of local events in global events.

\section{Čech cohomology}
\label{Čech}

To define the Čech cohomology, we need to use the covering $\mathcal{U}$, ignoring individual measurements. We define the nerve $\textsc{N}(\mathcal{U})$ as the collection of contexts and its non-empty intersections, with elements $\sigma=(U_{j_{0}},...,U_{j_{q}})$ given by $\left|\sigma\right|=\bigcap_{k=0}^{q}U_{j_{k}}\neq\emptyset$, with $j_{k}\in I$ ordered listed. The nerve of a covering defines an abstract simplicial complex, where the $q$-simplices forms a collection $\textsc{N}(\mathcal{U})^{q}$. See that for this to hold the contexts that form $\sigma$ must be different\footnote{There is an equivalence between ordered and unordered Čech cohomology, where in the last one we can repeat the contexts, but for clarity I will work with the ordered one.}. We can define the map 
\begin{equation}
    \partial_{j_{k}}:\textsc{N}(\mathcal{U})^{q}\to\textsc{N}(\mathcal{U})^{q-1}
\end{equation} 
as $\partial_{j_{k}}(\sigma)=(U_{j_{0}},...,\widehat{U_{j_{k}}},...,U_{j_{q}})$, where the hat means the omission of the context.

\subsubsection*{Cohomological contextuality}

The Čech cohomology is defined not by $\mathcal{D}_{R}\mathcal{E}$, but by a functor $\mathcal{F}:\textsc{N}(\mathcal{U})\to\mathbf{AbGrp}$ from the nerve to the category of Abelian groups, that represents the presheaf $\mathcal{D}_{R}\mathcal{E}$. Typically $\mathcal{F}=F_{S}\mathcal{L}$, where $\mathcal{L}$ at least a subsheaf of $\mathcal{E}$, with $F_{S}:\mathbf{Set}\to\mathbf{AbGrp}$ that assigns to a set $O$ the free Abelian group $F_{S}(O)$ generated by it related to the ring $S$. Specifically, we define it by a free module on a ring $S$, see Ref. \cite{abramsky_et_al:LIPIcs:2015:5416}. In other words, it is a representation of $\mathcal{E}$. Such presheaf must satisfy:
\begin{itemize}
    \item $\mathcal{F}(U)\neq\emptyset$ for all $U\in\mathcal{U}$
    \item $\mathcal{F}$ is flasque beneath the cover (the restriction map $\rho^{\mathcal{F}}(U',U)$ on $\mathcal{F}$ is surjective whenever $U\subseteq U'\subseteq V$, $V\in\mathcal{U}$
    \item any compatible family given by just one of the events for each context, here thought as an element of the basis, induces an unique global section.
\end{itemize}
The first condition implies that a context must have a non-trivial image by $F$, and the second one codifies that the information given by $F$ in a subcontext is already in the context that contains it. The third condition follows from the imposition of Gluing and Locality axioms on the basis and is induced by the sheaf structure of the events. 

We can define an augmented Čech cochain complex
\begin{equation}
    \begin{tikzcd}[row sep=tiny]
\mathbf{0} \arrow{r} & C^{0}(\mathcal{U},\mathcal{F}) \arrow{r}{d^{0}} & C^{1}(\mathcal{U},\mathcal{F}) \arrow{r}{d^{1}} & C^{2}(\mathcal{U},\mathcal{F}) \arrow{r}{d^{2}} & \dots
\end{tikzcd}
\end{equation}
where the Abelian group of $q$-cochains is
\begin{equation}
    C^{q}(\mathcal{U},\mathcal{F})=\prod_{\sigma\in\textsc{N}(\mathcal{U})^{q}}\mathcal{F}\left(\left|\sigma\right|\right)
\end{equation}
and the coboundary map $d^{q}:C^{q}(\mathcal{U},\mathcal{F})\to C^{q+1}(\mathcal{U},\mathcal{F})$ as the group homomorphism given by
\begin{equation}
    d^{q}(\omega)(\sigma)=\sum_{k=0}^{q+1}(-1)^{k}\rho'(\left|\partial_{j_{k}}\sigma\right|,\left|\sigma\right|)\omega(\partial_{j_{k}}\sigma)
\end{equation}
with $\omega\in C^{q}(\mathcal{U},\mathcal{F})$ and $\sigma\in\textsc{N}(\mathcal{U})^{q+1}$. One can show that $d^{q+1}d^{q}=0$, so we can construct the cohomology of this cochain complex.

The Abelian group of $q$-cocycles is defined by 
\begin{equation}
Z^{q}(\mathcal{U},\mathcal{F})=\left\{c\in C^{q}(\mathcal{U},\mathcal{F})|d^{q}c=0\right\}=\text{ker}(d^{q}).
\end{equation}
The Abelian group of $q$-coboundaries is defined by \begin{equation}
B^{q}(\mathcal{U},\mathcal{F})=\left\{c\in C^{q}(\mathcal{U},\mathcal{F})|c=d^{q-1}z,\text{ }z\in C^{q-1}(\mathcal{U},\mathcal{F})\right\}=\text{Im}(d^{q-1}).
\end{equation}
Clearly $B^{q}(\mathcal{U},\mathcal{F})\subseteq Z^{q}(\mathcal{U},\mathcal{F})$, and we define the $q$-th Čech cohomological group as the quotient $\Breve{H}^{q}(\mathcal{U},\mathcal{F})=Z^{q}(\mathcal{U},\mathcal{F})/B^{q}(\mathcal{U},\mathcal{F})$.

\subsubsection*{Standard results}

Let's see two results already known in the literature of Čech cohomology and cohomological contextuality \cite{Abramsky_2012}. The first result is the relation between compatible families and the $0$-th cohomological group. 

\begin{proposition}
There is a bijection between compatible families and elements of the zeroth cohomology group $\Breve{H}^{0}(\mathcal{U},\mathcal{F})$.
\end{proposition}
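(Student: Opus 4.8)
The plan is to show that the asserted bijection is, after unwinding the definitions, essentially an equality, so the whole argument lives at the very bottom of the cochain complex and is computational. First I would identify $C^{0}(\mathcal{U},\mathcal{F})$ concretely: the $0$-simplices of $\textsc{N}(\mathcal{U})$ are exactly the contexts $U_{j}$, so an element $\omega\in C^{0}(\mathcal{U},\mathcal{F})=\prod_{j}\mathcal{F}(U_{j})$ is precisely a family $\{s_{j}=\omega(U_{j})\in\mathcal{F}(U_{j})\}_{j\in I}$. Thus the underlying set of $C^{0}(\mathcal{U},\mathcal{F})$ is the set of all such families, compatible or not.

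Next I would evaluate $d^{0}$ on an arbitrary $1$-simplex $\sigma=(U_{j},U_{k})$ with $\left|\sigma\right|=U_{j}\cap U_{k}$. Its two faces are obtained by omitting $U_{j}$ (position $0$) and $U_{k}$ (position $1$), namely $\partial_{j}\sigma=(U_{k})$ and $\partial_{k}\sigma=(U_{j})$, so the coboundary formula specializes to
\begin{equation}
d^{0}(\omega)(\sigma)=\rho'(U_{k},U_{j}\cap U_{k})\,\omega(U_{k})-\rho'(U_{j},U_{j}\cap U_{k})\,\omega(U_{j})=s_{k}|_{jk}-s_{j}|_{jk}.
\end{equation}
Hence $\omega\in Z^{0}(\mathcal{U},\mathcal{F})=\ker(d^{0})$ if and only if $s_{j}|_{jk}=s_{k}|_{jk}$ for every pair $j,k$, which is verbatim the defining condition of a compatible family for $\mathcal{F}$. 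This identifies the cocycle group $Z^{0}(\mathcal{U},\mathcal{F})$ with the set of compatible families.

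Finally I would invoke the augmentation of the complex. Because it begins with $\mathbf{0}\to C^{0}(\mathcal{U},\mathcal{F})$, there is no $C^{-1}$ and $d^{-1}$ is the zero map, so $B^{0}(\mathcal{U},\mathcal{F})=\text{Im}(d^{-1})=\mathbf{0}$. Therefore $\Breve{H}^{0}(\mathcal{U},\mathcal{F})=Z^{0}(\mathcal{U},\mathcal{F})/B^{0}(\mathcal{U},\mathcal{F})=Z^{0}(\mathcal{U},\mathcal{F})$, and the bijection of the statement is simply the identification of a compatible family with its cohomology class; since the restriction maps $\rho'$ are group homomorphisms, compatible families form a subgroup of $C^{0}(\mathcal{U},\mathcal{F})$ and the bijection is in fact a group isomorphism. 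The only points requiring care — the closest thing to an obstacle — are bookkeeping: fixing the sign and orientation convention so that the two faces of a $1$-simplex yield the equality $s_{j}|_{jk}=s_{k}|_{jk}$ rather than its negative, and making explicit that the augmentation forces $B^{0}=\mathbf{0}$ so that $\Breve{H}^{0}$ collapses onto the cocycles. Beyond these conventions there is no genuine difficulty.
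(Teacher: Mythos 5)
Your proof is correct and follows essentially the same route as the paper, which justifies the proposition by noting that on the augmented complex $B^{0}(\mathcal{U},\mathcal{F})$ is trivial so that $\Breve{H}^{0}=Z^{0}$, the latter being identified with compatible families via the kernel condition of $d^{0}$ (the paper spells out this computation explicitly only in its semi-module generalization, but it is the same calculation you perform). Your added remark that the bijection is in fact a group isomorphism is a harmless strengthening.
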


This result follows from the observation that on the augmented cochain complex, the coboundary group $B^{0}(\mathcal{U},\mathcal{F})$ is trivial. It allows the use of elements of $\Breve{H}^{0}(\mathcal{U},\mathcal{F})$ in the search for an extension to a compatible family.

The second result is the characterization of what is called cohomological contextuality. It consists of the construction of obstruction of an initial $0$-cochain that codifies the local sections of $\mathcal{F}$ on the covering $\mathcal{U}$. 

We will need some concepts. Let's define an auxiliary presheaf $\mathcal{F}|_{U}(V)=\mathcal{F}(U\cap V)$, and the canonical presheaf map $p:\mathcal{F}\to\mathcal{F}|_{U}::p_{V}:r\mapsto r|_{U\cap V}$. Another auxiliary presheaf is $\mathcal{F}_{\Bar{U}}(V)=\text{ker}(p_{V})$, defining the exact sequence of presheaves
\begin{equation}
    0\longrightarrow\mathcal{F}_{\Bar{U}}\longrightarrow\mathcal{F} \xrightarrow{\,\,\,\, p\,\,\,\,}\mathcal{F}|_{U}
\end{equation}
The $U$-relative cohomology is defined as the Čech cohomology of the presheaf $\mathcal{F}_{\Bar{U}}$.

One can show that the elements of $\Breve{H}^{0}(\mathcal{U},\mathcal{F}_{\Bar{U}_{j}})$ are in bijection with compatible families $\left\{r_{k}\right\}$ where $r_{j}=0$. Now, starting with a local section $s_{j_{0}}$ of $U_{j_{0}}$, the non-disturbance condition implies the existence of a family $\left\{s_{j_{k}}\right\}$ such that $s_{j_{0}}|_{j_{0}j_{k}}=s_{j_{k}}|_{j_{0}j_{k}}$ for all $k\neq 0$. Defining $c=\left\{s_{j_{k}}\right\}_{0\leq k}\in C^{0}(\mathcal{U},\mathcal{F})$, a trivial computation shows that $z=dc\in Z^{1}(\mathcal{U},\mathcal{F}_{\Bar{U}_{j_{0}}})$, and we define the obstruction $\gamma(s_{j_{0}})$ as the cohomology class $[z]\in\Breve{H}^{1}(\mathcal{U},\mathcal{F}_{\Bar{U}_{j_{0}}})$.

\begin{proposition}
Let $\mathcal{U}$ be connected, $U_{j_0}\in\mathcal{U}$, and $s_{j_0}\in\mathcal{F}(U_{j_0})$. Then $\gamma(s_{j_0})=0$ if and only if there is a compatible family $\left\{r_{j_k}\in\mathcal{F}(U_{j_{k}})\right\}_{U_{j_{k}}\in\mathcal{U}}$ such that $r_{j_0}=s_{j_0}$.
\end{proposition}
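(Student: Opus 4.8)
The plan is to run the standard relative-cohomology argument attached to the exact sequence $0\to\mathcal{F}_{\bar{U}_{j_0}}\to\mathcal{F}\xrightarrow{p}\mathcal{F}|_{U_{j_0}}$, using throughout that everything lives in Abelian groups so that differences of cochains are available. First I would record that a compatible family is exactly a $0$-cocycle: the condition $\rho'(j,jk)(r_{j})=\rho'(k,jk)(r_{k})$ is precisely $d^{0}r=0$, so compatible families are the elements of $Z^{0}(\mathcal{U},\mathcal{F})$. I would then restate the claim as: the cocycle $z=dc$, built from a non-disturbing family $c=\{s_{j_k}\}$ extending $s_{j_0}$ on the overlaps, is a coboundary in the relative complex $C^{\bullet}(\mathcal{U},\mathcal{F}_{\bar{U}_{j_0}})$ if and only if $s_{j_0}$ is the $j_0$-component of some $0$-cocycle of $\mathcal{F}$. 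Two elementary facts drive both directions: that $w\in\mathcal{F}_{\bar{U}_{j_0}}(U_{j_k})$ means $w|_{j_0 j_k}=0$, and that $\mathcal{F}_{\bar{U}_{j_0}}(U_{j_0})=\ker(p_{U_{j_0}})=0$, since $p_{U_{j_0}}$ is the identity $\mathcal{F}(U_{j_0})\to\mathcal{F}(U_{j_0}\cap U_{j_0})$.

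For the direction ($\Leftarrow$), suppose a compatible family $r=\{r_{j_k}\}$ with $r_{j_0}=s_{j_0}$ is given. I would form the $0$-cochain $c-r\in C^{0}(\mathcal{U},\mathcal{F})$ and check that it lands in the relative subcomplex: on each overlap one has $s_{j_k}|_{j_0 j_k}=s_{j_0}|_{j_0 j_k}$ by the non-disturbing construction of $c$, while $r_{j_k}|_{j_0 j_k}=r_{j_0}|_{j_0 j_k}=s_{j_0}|_{j_0 j_k}$ by compatibility of $r$ together with $r_{j_0}=s_{j_0}$; hence $(c-r)_{j_k}|_{j_0 j_k}=0$ and $c-r\in C^{0}(\mathcal{U},\mathcal{F}_{\bar{U}_{j_0}})$. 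Since $r$ is a cocycle, $d(c-r)=dc-dr=dc=z$, so $z$ is a relative coboundary and $\gamma(s_{j_0})=[z]=0$.

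For ($\Rightarrow$), assume $\gamma(s_{j_0})=[z]=0$, so $z=db$ for some $b\in C^{0}(\mathcal{U},\mathcal{F}_{\bar{U}_{j_0}})$, where the relative coboundary is the restriction of the ambient one along $\mathcal{F}_{\bar{U}_{j_0}}\hookrightarrow\mathcal{F}$. Viewing $b$ inside $C^{0}(\mathcal{U},\mathcal{F})$, I set $r:=c-b$; then $dr=dc-db=z-z=0$, so $r$ is a compatible family, and its $j_0$-component is $r_{j_0}=s_{j_0}-b_{j_0}=s_{j_0}$, the last equality because $b_{j_0}\in\mathcal{F}_{\bar{U}_{j_0}}(U_{j_0})=0$. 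This exhibits the desired extension. Connectedness of $\mathcal{U}$ enters only through the standing setup: it guarantees the non-disturbing propagation of $s_{j_0}$ used to build $c$ and underwrites the preceding identification of $\Breve{H}^{0}(\mathcal{U},\mathcal{F}_{\bar{U}_{j_0}})$ with compatible families vanishing at $j_0$, and I would not expect the core cancellation to require more.

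The routine but essential bookkeeping is the identification of \emph{compatible family} with \emph{$0$-cocycle} together with the verification that the relative coboundary is the honest restriction of $d$; once these are fixed the argument collapses to a two-line algebraic manipulation. The point I would emphasise is where the proof spends its hypothesis: both directions hinge on forming the differences $c-r$ and $c-b$ and on cancellations such as $z-z=0$. This is exactly the structure that is unavailable over a general semi-module, where additive inverses need not exist, which is precisely why the naive obstruction must be replaced by the generalized notion developed in Section \ref{Obstruction}. Accordingly I would present this proof partly as a template whose failure mode over semi-rings motivates the remainder of the paper.
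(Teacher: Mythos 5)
Your proof is correct, and it is precisely the standard argument from the cited reference \cite{Abramsky_2012} that the paper relies on (the paper itself states this proposition without proof): form the differences $c-r$ and $c-b$ in the Abelian-group cochain complex, use $\mathcal{F}_{\bar{U}_{j_0}}(U_{j_0})=\ker(p_{U_{j_0}})=0$ to pin down the $j_0$-component, and cancel. Your closing observation — that both directions hinge on subtraction and hence fail over general semi-modules — is exactly the motivation the paper gives for the generalized obstruction of Section \ref{Obstruction}.
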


The idea behind this construction is to take a local section and ask if there is an extension to a global section, looking for the triviality of the obstruction. It is a sufficient condition for contextual behavior to have non-trivial obstruction for some local sections, but not a necessary one. A model that presents at least one non-trivial obstruction will be called logically cohomological contextual, to distinguish from the usual possibilistic contextual models.

\section{What is forgotten?}
\label{What}

The last condition imposed on $\mathcal{F}$ defines some limitations of what kind of element of $\mathcal{F}(U)$, defined as the free module on $O^{U}$, we have access to. To see that, if we choose $F_{S=\mathbb{Z}}$, we can define the element $\sum_{i}[s^i]$ of $\mathcal{F}$ adding all local sections $s_{i}$ of $\mathcal{E}(U)$ with non-null $R$-measure, $\mu_{\mathbb{B}}^{O^{U}}(s_{i})=1_{\mathbb{B}}$. By the non-disturbance condition, it has a compatible family, the model itself, but it can have possibilistic contextual behavior. Therefore, there is a restriction of which kind of local sections we can work in the cohomological framework. Usually, this is avoided by choosing just a single $s^{i}$ to start the analysis of the cohomological obstruction\footnote{Another way to see that is noting that it is equivalent to describe compatible local sections as the restriction of a global one. In the case of a linear combination of the local sections of a context, representing the possibilistic sections of a model, one could ask if the negative coefficients change this conclusion. If it was possible, we could write the incident matrix $M$ of the model and there will be a solution of the matrix equation $Mb=p$, see Ref. \cite{Abramsky_2017,Abramsky_2011}. Here $p$ is the possibilistic coefficients, the entries of the outcome table of the model that is in the Boolean semi-ring, and $b$ are the coefficients for the global sections. But the solution of this equation isn't granted even if we use integer coefficients in $b$ \cite{hermida1986,camion1971}.}.

\subsubsection*{Rings, semi-rings, and forgetful functors}

The intuition of the use of cohomology in contextuality is as a way to codify in an Abelian group the data in each context plus outcome set plus possible measures and use them to study its behavior. It is natural to use the free Abelian group generated by the outcome set given by a functor over the category of contexts, therefore codifying contexts and outcomes. We can understand it in the following way. Let's assume enough conditions as presented to contextual behavior appears as the non-factorizability of the model. Graphically 
\begin{equation}
    \begin{tikzcd}
\mathcal{E}(\textsc{N}(\mathcal{U})^{0}) \arrow{rr} \arrow{dr}{i'} & & S \\
& \mathcal{E}(X) \arrow{ur} &
\end{tikzcd}
\end{equation}
where we start with the data of $\mathcal{E}(\textsc{N}(\mathcal{U})^{0})\to S$, and ask for the existence of the data of $\mathcal{E}(X)\to S$ plus a map $i'$ such that the diagram commutes. We are asking for a factorization of the local events to global events when using $F_{S}$ as the tool to quantify the measurement scenario. These diagrams are the informal way to show the data in $\Breve{H}^{0}(\mathcal{U},\mathcal{F})\to C^{0}(\mathcal{U},\mathcal{F})$ for $\mathcal{F}=F_{S}\mathcal{E}$. The previous construction of cohomology ask for the data that can't be codified in a context, and cohomological contextuality is the existence of more data than the context can storage\footnote{The construction the previous section use a fact that was explicitly showed later, that any non-contextual empirical model admits a morphism with the trivial empirical model \cite{Karvonen_2019}. The obstruction can then be understood as the identification of a failure in the morphisms of the category of empirical models, one that forbids the morphism between the trivial empirical model and the model in question.}.

But the choice of the ring $S$ for the definition of the group does not come from the measure. This is the reason for the impossibility of the characterization with standard Čech cohomology\footnote{The use of $S=\mathbb{Z}$ is usual choice. The justification in contextuality is the use in possibilistic models, the ones with $\mathbb{B}$-measures, and strong contextual models \cite{Abramsky_2017}. But as shown in Ref. \cite{Car__2017}, even for strong contextual models the characterization by cohomology can fail.}.

The categories of rings, Abelian groups, and modules are canonically related, as the categories of semi-rings, Abelian monoids, and semi-modules. So the relation between these algebraic structures is simply codified by the canonical forgetful functor $F:\textbf{Ring}\to\textbf{Rig}$, that forget the negatives of the ring category. To study such functor, let's revise what we can forget with it \cite{Baez_2009}. 

\begin{definition}
Let $F:\mathcal{A}\to\mathcal{B}$ be a functor, it
\begin{itemize}
    \item is called full if for all pair of objects $A,A'$ of $\mathcal{A}$ the map $F:\text{hom}(A,A')\to\text{hom}(F(A),F(A'))$ induced by $F$ is surjective; it forgets structure if it is not full;
    \item is called faithful if the induced map is injective for all pairs of objects; it forgets stuff if it is not faithful;
    \item is called essentially surjective if for any object $B$ of $\mathcal{B}$ there is an object $A$ of $\mathcal{A}$ such that $F(A)$ is isomorphic to $B$; it forgets properties if it is not essentially surjective;
    \item forgets nothing if it is full, faithful, and essentially surjective, or equivalently $F$ admits an inverse, and the categories are called equivalents.
\end{itemize}
\end{definition}

A known result is that a semi-ring can be completed into a ring if and only if the semi-ring is cancellative\footnote{The left adjoint of the forgetful functor $F:\textbf{Ring}\to\textbf{Rig}$ is not monic if the additive monoid of $R$ is not cancellative.}. As an example, the Boolean ring $\mathbb{B}$ isn't cancellative:
\begin{equation}
    1_{\mathbb{B}}+1_{\mathbb{B}}=1_{\mathbb{B}}=1_{\mathbb{B}}+0_{\mathbb{B}}
\end{equation}
but if the cancellative holds, we conclude that $1_{\mathbb{B}}=0_{\mathbb{B}}$, absurd. Therefore the functor $F:\textbf{Ring}\to\textbf{Rig}$ forgets property. Also, there are more homomorphisms between objects of $\textbf{Rig}$ than in objects of $\textbf{Ring}$, implying that $F$ forgets structure. It doesn't forget stuff once every ring in $\textbf{Ring}$ is a semi-ring in $\textbf{Rig}$, with the morphisms being preserved. 
In conclusion, there are too much structure and property in $\textbf{Ring}$, the category we use to define the cohomology with Abelian groups. Extra property plus structure allows the violation of the cohomological characterization of contextual behavior, there are too many ways to justify non-contextuality\footnote{Categorically, we can construct a natural transformation between the functors $\mathcal{D}_{R}$ and $F_{S}$. With the forgetful map, one can show that usually there isn't natural equivalence between them, and this allows the violations.}. Explicitly, the existence of negatives allows the $"Z"$ sections, causing the known violations, as shown in the following example.

\subsubsection*{Examples of violation}

The possibilistic empirical model given by Table \ref{Table} and its bundle representation in Figure \ref{example} is an explicit example of the violation of the cohomological contextuality. It's codified in the table
\begin{equation}
\label{Table}
\begin{tabular}{lllll}
\hline\noalign{\smallskip}
& \large{$00$} & \large{$01$} & \large{$10$} & \large{$11$} \\
\noalign{\smallskip}\hline\noalign{\smallskip}
\large{$ab$} & \large{$1_{\mathbb{B}}$} & \large{$0$} & \large{$0$} & \large{$1_{\mathbb{B}}$} \\
\noalign{\smallskip}\hline\noalign{\smallskip}
\large{$bc$} & \large{$1_{\mathbb{B}}$} & \large{$0$} & \large{$0$} & \large{$1_{\mathbb{B}}$} \\
\noalign{\smallskip}\hline\noalign{\smallskip}
\large{$bc$} & \large{$1_{\mathbb{B}}$} & \large{$0$} & \large{$0$} & \large{$1_{\mathbb{B}}$} \\
\noalign{\smallskip}\hline\noalign{\smallskip}
\large{$da$} & \large{$1_{\mathbb{B}}$} & \large{$1_{\mathbb{B}}$} & \large{$1_{\mathbb{B}}$} & \large{$1_{\mathbb{B}}$} \\
\noalign{\smallskip}\hline
\end{tabular}
\end{equation}
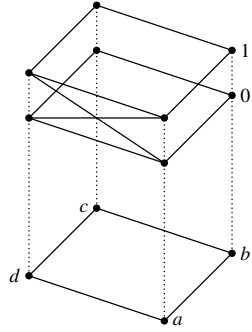
\begin{figure}
\centering
\scalebox{0.3}{
\begin{tikzpicture}

%Base quadrado
\draw [ultra thick] (-9,12) -- (-3,10) node[right] {\Huge{$\ a$}};
\draw [ultra thick] (-3,10) -- (0,13) node[right] {\Huge{$\ b$}}; 
\draw [ultra thick] (0,13) -- (-6,15)
node[left] {\Huge{$c\ $}}; 
\draw [ultra thick] (-6,15) -- (-9,12) node[left] {\Huge{$d\ $}};

%Vértices pentágono base
\filldraw [black] (-9,12) circle (4pt);
\filldraw [black] (-3,10) circle (4pt);
\filldraw [black] (0,13) circle (4pt);
\filldraw [black] (-6,15) circle (4pt);

%Verticais
\draw[loosely dotted, ultra thick] (-9,12) -- (-9,21);
\draw[loosely dotted, ultra thick] (-3,10) -- (-3,19);
\draw[loosely dotted, ultra thick] (0,13) -- (0,22);
\draw[loosely dotted, ultra thick] (-6,15) -- (-6,24);

%Vértices pentágono 0
\filldraw [black] (-9,21) circle (4pt);
\filldraw [black] (-3,19) circle (4pt);
\filldraw [black] (0,22) circle (4pt);
\filldraw [black] (-6,24) circle (4pt);

%Vértices pentágono 1
\filldraw [black] (-9,19) circle (4pt);
\filldraw [black] (-3,17) circle (4pt);
\filldraw [black] (0,20) circle (4pt);
\filldraw [black] (-6,22) circle (4pt);

%sections
\draw [ultra thick] (-9,19) -- (-3,19);
\draw [ultra thick] (-3,19) -- (0,22) node[right] {\Huge{$\ 1$}}; 
\draw [ultra thick] (0,20) -- (-6,22); 
%\draw [ultra thick] (-6,22) -- (-9,21);

\draw [ultra thick] (-9,21) -- (-3,17);
\draw [ultra thick] (-3,17) -- (0,20) node[right] {\Huge{$\ 0$}}; 
\draw [ultra thick] (0,22) -- (-6,24); 
%\draw [ultra thick] (-6,24) -- (-9,19);

\draw [ultra thick] (-9,21) -- (-3,19);
\draw [ultra thick] (-9,19) -- (-3,17);

\draw [ultra thick] (-6,22) -- (-9,19);
\draw [ultra thick] (-6,24) -- (-9,21);

%\draw [ultra thick] (0,22) -- (-6,22); 
%\draw [ultra thick] (0,20) -- (-6,24); 

\end{tikzpicture}}
\caption{Bundle diagram of the $\mathbb{B}$-empirical model of the Table \ref{Table}.}
\label{example}
\end{figure}

First, this model shows that we need to choose which section we will start with. If we choose to extend the section
\begin{equation}
    s=[(da)\to(00)]+[(da)\to(01)]+[(da)\to(10)]+[(da)\to(11)]
\end{equation}
it has only one possible compatible family to describe it, the rest of the model. But as a contextual model, there isn't a global section \footnote{Even if we allow negative coefficients. This model is the combination of the trivial one with Hardy's model, and there isn't a Boolean ring solution for this model.}.

Let's choose one local section, and not a linear combination of them. The cohomology will be given by the free Abelian group $F_{\mathbb{Z}}$ as already discussed, with the integers as coefficients. One can easily show that all of them admit an extension with trivial obstruction because we can use the $"Z"$ structure on the context $da$. For horizontal sections it's trivial, and for diagonal sections we can use the other diagonal one and the negative of a horizontal section in the same context to achieve a global section without obstruction, for example
\begin{equation}
\begin{split}
    s=(&[(da)\to(01)]-[(da)\to(00)]+[(da)\to(10)],\\
    &[(ab)\to(11)],[(bc)\to(11)],[(cd)\to(11)]).
\end{split}
\end{equation}
The first term is the $"Z"$ structure for the extension of any of the diagonal sections. It's an example of a contextual model that shows logically cohomological non-contextuality (for more examples, even for strong contextual models, see Ref. \cite{caru2018towards,Car__2017}).

Another known example of such a violation is interesting. Usually, contextuality is explored as a probabilistic behavior, with the positive real numbers as the semi-ring $R$ of the empirical model. The natural choice is a functor $F_{\mathbb{R}}$ where the free Abelian group has coefficients in the ring of reals and works with the cohomology of it. We start with the local sections and codify the probabilities in the coefficients. Even with $R\in\mathbb{R}$, a result in Ref. \cite{Abramsky_2011} for finite outcome sets shows the equivalence between non-disturbing probabilistic empirical models and probabilistic empirical models that can be realized with negative real numbers. Therefore $F_{\mathbb{R}}$ shows trivial obstruction for non-disturbing models, an extreme example of violation in the characterization of contextual behavior with standard cohomology.

\section{Čech cohomology on semi-modules}
\label{semi-modules}

Following the points raised in the discussion of the previous section, let's construct the cohomological tools presented but for semi-rings. The idea is to use the coefficients to codify the measures, naturally restricting all the construction to semi-rings. The forgetting of property imposes that any enough general use of cohomological tool must use only the semi-ring and not a ring that completes it.

\subsubsection*{Cohomology on semi-modules}

The cohomology with semi-modules on a semi-ring $R$ has as a difference the impossibility of defining the coboundary operators \cite{patchkoria2006exactness}. Such maps explicitly depend on subtraction, something that usually doesn't exist in the realm of semi-rings.

\begin{definition}
A cochain complex of $R$-semi-modules $C=\left\{C^{q},d^{q}_{+},d^{q}_{-}\right\}_{q\in\mathbb{Z}}$ consists of $R$-semi-modules $C^{q}$ and $R$-homomorphisms $d^{q}_{+},d^{q}_{-}$ as:
\begin{equation}
    C=\dots\xbigtoto[d^{q-2}_{-}]{d^{q-2}_{+}}C^{q-2}\xbigtoto[d^{q-1}_{-}]{d^{q-1}_{+}}C^{q}\xbigtoto[d^{q}_{-}]{d^{q}_{+}}C^{q+1}\xbigtoto[d^{q+1}_{-}]{d^{q+1}_{+}}\dots
\end{equation}
that satisfy the condition
\begin{equation}
    d^{q+1}_{+}\circ d^{q}_{+}+d^{q+1}_{-}\circ d^{q}_{-}=d^{q+1}_{-}\circ d^{q}_{+}+d^{q+1}_{+}\circ d^{q}_{-}.
\end{equation}
\end{definition}

We define the $R$-semi-module of $q$-cocycles as $Z^{q}=\left\{c\in C^{q}|d^{q}_{+}(c)=d^{q}_{-}(c)\right\}$, and the $q$-th cohomology $R$-semi-module as $H^{q}(C)=Z^{q}(C)/\rho^{q}$, with $\rho^{q}$ a congruence relation in $Z^{q}(C)$. There are different relations in the literature that define different cohomologies, for example $x\rho^{q}y$ can means that for some $u,v\in C^{q-1}$ holds
\begin{equation}
    x+d^{q-1}_{+}(u)+d^{q-1}_{-}(v)=y+d^{q-1}_{+}(v)+d^{q-1}_{-}(u)
\end{equation}
or that for some $u\in C^{q-1}$ holds
\begin{equation}
    x+d^{q-1}_{+}(u)=y+d^{q-1}_{-}(u)
\end{equation}
(see Ref. \cite{Patchkoria2014CohomologyMO,Patchkoria2017CohomologyMO} for details). Usually, the choice of one relation depends on how difficult is to deal with these algebraic objects.

A cochain sequence of modules $G=\left\{G^{q},d^{q}_{+},d^{q}_{-}\right\}$ is a cochain complex in the above sense of groups if and only if $G'=\left\{G^{q},d^{q}_{+}-d^{q}_{-}\right\}$ is the cochain complex of modules. This definition is a direct generalization of the usual cohomology on modules, and the cohomological semi-modules of $G$ are the modules of the cohomology of $G'$ in the usual sense.

\subsubsection*{Čech cohomology with semi-rings}

The Čech cohomology on $R$-semi-modules is defined as usual, but one needs to work with the differentials separately \cite{Jun_2017}. We have a covering $\mathcal{U}$, and we can construct its nerve $\textsc{N}(\mathcal{U})$ and the map $\partial_{j_k}$. We substitute the presheaf $\mathcal{F}$ of Abelian groups by a presheaf of $R$-semi-modules $\mathcal{G}$. 

The cochain complex is $C^{q}(\mathcal{U},\mathcal{G})=\prod_{\sigma\in\textsc{N}(\mathcal{U})^{q}}\mathcal{G}(\left|\sigma\right|)$, and the coboundary operators are defined by
\begin{equation}
    d^{q}_{+}(\omega)(\sigma)=\sum_{k=0, even}^{q+1}\rho'(\left|\partial_{j_{k}}\sigma\right|,\left|\sigma\right|)\omega(\partial_{j_{k}}\sigma)
\end{equation}
\begin{equation}
    d^{q}_{-}(\omega)(\sigma)=\sum_{j=0, odd}^{q+1}\rho'(\left|\partial_{j_{k}}\sigma\right|,\left|\sigma\right|)\omega(\partial_{j_{k}}\sigma).
\end{equation}
We define the $R$-semi-module of $q$-cocycles $Z^{q}(\mathcal{U},\mathcal{G})$ and the $q$-th Čech cohomology $R$-semi-module $\Breve{H}^{q}(\mathcal{U},\mathcal{G})=Z^{q}(\mathcal{U},\mathcal{G})/\rho^{q}$, for a chosen relation $\rho^{q}$.

\subsubsection*{Generalization of first result}

The notion of compatible family is independent of the algebraic structure, but its relation with  $\Breve{H}^{0}(\mathcal{U},\mathcal{G})$ must be re-explored.

\begin{proposition}
There is a bijection between compatible families and elements of the zeroth cohomology $R$-semi-module $\Breve{H}^{0}(\mathcal{U},\mathcal{G})$.
\end{proposition}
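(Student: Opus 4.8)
The plan is to realize the bijection as a composite of two identifications: first that $0$-cocycles are exactly compatible families, and then that at degree $0$ the congruence $\rho^{0}$ is trivial, so the projection $Z^{0}(\mathcal{U},\mathcal{G})\to\Breve{H}^{0}(\mathcal{U},\mathcal{G})$ is already a bijection. This is the semi-module analogue of the group-case proposition, where the same conclusion was obtained from the vanishing of $B^{0}(\mathcal{U},\mathcal{F})$ in the augmented complex.

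First I would unwind the two differentials in degree $0$. Since $\textsc{N}(\mathcal{U})^{0}=\mathcal{U}$, a $0$-cochain is nothing but a family $\omega=(\omega(U))_{U\in\mathcal{U}}$ with $\omega(U)\in\mathcal{G}(U)$. For $\sigma=(U_{j_0},U_{j_1})\in\textsc{N}(\mathcal{U})^{1}$ we have $\partial_{j_0}\sigma=(U_{j_1})$ and $\partial_{j_1}\sigma=(U_{j_0})$, so the even and odd parts of the coboundary evaluate to
\begin{equation}
    d^{0}_{+}(\omega)(\sigma)=\rho'(\left|\partial_{j_0}\sigma\right|,\left|\sigma\right|)\,\omega(U_{j_1})=\omega(U_{j_1})|_{j_0 j_1},\qquad d^{0}_{-}(\omega)(\sigma)=\rho'(\left|\partial_{j_1}\sigma\right|,\left|\sigma\right|)\,\omega(U_{j_0})=\omega(U_{j_0})|_{j_0 j_1}.
\end{equation}
Thus the cocycle condition $d^{0}_{+}(\omega)=d^{0}_{-}(\omega)$ holds precisely when $\omega(U_{j_0})|_{j_0 j_1}=\omega(U_{j_1})|_{j_0 j_1}$ for every ordered pair; since this relation is symmetric under exchanging the two indices, it is exactly the defining compatibility condition for the family $(\omega(U))_{U}$. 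The assignment is the identity on underlying sets, so $Z^{0}(\mathcal{U},\mathcal{G})$ is in canonical bijection with the set of compatible families.

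Next I would show the quotient is invisible. Working in the augmented complex, $C^{-1}(\mathcal{U},\mathcal{G})=\mathbf{0}$, so the only degree $-1$ cochain is the zero element and $d^{-1}_{+}=d^{-1}_{-}=0$. Each candidate relation for $\rho^{0}$ is built from boundary terms $d^{-1}_{\pm}(u)$ with arguments in $C^{-1}$; substituting $u=v=0$ into either $x+d^{-1}_{+}(u)+d^{-1}_{-}(v)=y+d^{-1}_{+}(v)+d^{-1}_{-}(u)$ or $x+d^{-1}_{+}(u)=y+d^{-1}_{-}(u)$ reduces the relation to $x=y$. Hence $\rho^{0}$ is the diagonal congruence on $Z^{0}(\mathcal{U},\mathcal{G})$ and the projection onto $\Breve{H}^{0}(\mathcal{U},\mathcal{G})$ is a bijection, regardless of which of the standard congruences one adopts.

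Composing the two identifications gives the proposition. The only step needing genuine care is the collapse of $\rho^{0}$: one must confirm that it is augmentation that trivializes the congruence in the semi-module world, taking over the role that triviality of $B^{0}$ played for Abelian groups, and that this holds uniformly for the different congruence relations $\rho^{q}$ recorded in the literature. The remainder is a faithful transcription of the group computation, carried out with $d^{0}_{+}$ and $d^{0}_{-}$ kept apart instead of subtracted.
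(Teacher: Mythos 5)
Your proposal is correct and follows essentially the same route as the paper's own proof: it identifies $Z^{0}(\mathcal{U},\mathcal{G})$ with compatible families by evaluating $d^{0}_{+}$ and $d^{0}_{-}$ on $1$-simplices, and collapses the quotient by observing that $C^{-1}(\mathcal{U},\mathcal{G})=\mathbf{0}$ trivializes every candidate congruence $\rho^{0}$. The only difference is the order of the two steps, which is immaterial.
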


\begin{proof}
By definition $\Breve{H}^{0}(\mathcal{U},\mathcal{G})=Z^{0}(\mathcal{U},\mathcal{G})/\rho^{0}$. But the relation depends on the choice: $x\rho^{0}y\iff x+d^{-1}_{+}(u)+d^{-1}_{-}(v)=y+d^{-1}_{+}(v)+d^{-1}_{-}(u)$ for some $u,v\in C^{-1}(\mathcal{U},\mathcal{G})$, follows from the definition $C^{-1}(\mathcal{U},\mathcal{G})=0$ that $x=y$, and therefore $\Breve{H}^{0}(\mathcal{U},\mathcal{G})=Z^{0}(\mathcal{U},\mathcal{G})$. Usually this result is true for any relation that generalize the condition of coboundaries. As \begin{equation}
Z^{0}(\mathcal{U},\mathcal{G})=\left\{y=(y_{U_{i}})\in C^{0}(\mathcal{U},\mathcal{G})=\prod_{i\in I}\mathcal{G}(U_{i})|d^{0}_{+}(y)=d^{0}_{-}(y)\right\}    
\end{equation}
and $d^{0}_{+}$ ($d^{0}_{-}$) is a product of maps $\mathcal{G}(U_{j})\to\mathcal{G}(U_{i}\cap U_{j})$ ($\mathcal{G}(U_{i})\to\mathcal{G}(U_{i}\cap U_{j})$, $j>i$) induced by the inclusion of contexts $U_{i}\cap U_{j}\to U_{j}$ ($U_{i}\cap U_{j}\to U_{i}$), we get $y_{U_{i}}|_{U_{i}\cap U_{j}}=y_{U_{j}}|_{U_{i}\cap U_{j}}$.
\end{proof}

\section{Obstruction}
\label{Obstruction}

We will impose that $\mathcal{G}$ satisfies the same three conditions of $\mathcal{F}$. We define it as a free $R$-semi-module generated by the local events for each element of $\mathcal{U}$ and intersections. Again, we need to restrict the section that we can explore.

The loss of the structures and properties has consequences. One of them is the impossibility of following the same path to relate the obstruction with contextuality because the notion of obstruction depends on the subtraction of the differentials. Without it, the usual definition of the differential $z$ as $d^{q}_{+}(c)=d^{q}_{-}(c)+z$ is not well defined. Therefore, there may not be a $z\in C^{1}(\mathcal{U},\mathcal{G})$ to work. An example is the probabilistic semi-ring: there isn't a probability measure $z$ such that $a=b+z$.

The beautiful relation between the cocycles and coboundaries is also lost. The quotient necessary to define the cohomology is given by a relation, and not by a semi-module or any other closed algebraic structure. There is another problem: which relation one chooses to define the cohomology? All the generalizations to semi-modules use different relations that become equivalent for the usual notion of coboundary for a module, but they are non-equivalents in their more general form.

\subsubsection*{Difference operator}

We need a formalism that allows the construction of obstruction without subtraction. It is a function, and its uniqueness to define the difference cocycle is what justifies its use. The difference between the differentials is codified by $f(d^{q}_{+}c)=d^{q}_{-}c=d^{q}_{+}c-z$, with a unique $z$. Without such a function, we need to search for another function with the ability to capture the difference between two distinct marginalizations in the same intersection. 

It exists as a cochain of operators in the $R$-semi-modules if we allow $R$ to be a semi-field. For the probabilistic case, for example, it would be a cochain of stochastic operators, and we need the product inversion to satisfy the probability condition. In analogy with stochastic operators, we can make the following definition.

\begin{definition}
An operator is called row $R$-stochastic if the sum of elements in every row is $1_{R}$, and column $R$-stochastic if its transpose is row $R$-stochastic.
\end{definition}

Let $g[\sigma]d^{q}_{+}(c)(\sigma)=d^{q}_{-}(c)(\sigma)$ defines the cochain of operators, such that for each element of $\sigma\in\text{N}(\mathcal{U})^{q+1}$ we have an operator. We can define for each $c\in C^{q+1}(\mathcal{U},\mathcal{G})$ an cochain of operators $g$ such that $gd^{q}_{+}(c)=d^{q}_{-}(c)$. The pair of cochain $c$ and one of its cochain of operators $g$ will be called difference cochain, and denoted by $(g,c)$. 

Usually there isn't an unique cochain of operators $g$ that satisfies it, so we define the equivalence relation $[g_{q},c]$ of cochains given by $[h,c]\sim [g,c]$ if $gd^{q}_{+}(c)=hd^{q}_{+}(c)$, and by abuse we will called it the difference cochain of $c$. The differentials couldn't define an element $z=dc$ of $c$ without negatives but can define a class of difference cochains $[g_{q},c]$ of $c$ if we restrict to semi-fields. As the relation between cochains is broken, $[g_{q},c]$ usually doesn't have a representation on the next cochain of the cochain complex.

Given a cochain $c\in C^{q}(\mathcal{U},\mathcal{G})$, a row $R$-stochastic operator exists to represent each entry of the difference cochain $[g_{q},c]$, uniquely defined. If $[Id,c]=[g_{q},c]$, then $d^{q}_{+}c=d^{q}_{-}c$, therefore $c$ is a cocycle. For what follows, the notion of coboundary is unnecessary. 

\begin{definition}
The difference operator is a function $[g_{q}]::c\in C^{q}(\mathcal{U},\mathcal{G})\mapsto [g_{q},c]$, defining for each cochain its unique class of difference cochains.
\end{definition}

Formally, we can thing the following diagram
\begin{equation}
    \begin{tikzcd}[row sep=tiny]
& & C^{1}(\mathcal{U},\mathcal{G}) \arrow{dd}{[g_{0}]} \arrow{dr}{id} & & C^{2}(\mathcal{U},\mathcal{G}) \arrow{dd}{[g_{1}]} \arrow{dr}{id} & &\\
0 \arrow[r] & C^{0}(\mathcal{U},\mathcal{G}) \arrow{ur}{d^{0}_{+}} \arrow{dr}{d^{0}_{-}} & & C^{1}(\mathcal{U},\mathcal{G}) \arrow{ur}{d^{0}_{+}} \arrow{dr}{d^{0}_{-}} && C^{2}(\mathcal{U},\mathcal{G}) & \dots\\
& & C^{1}(\mathcal{U},\mathcal{G}) \arrow{ur}{id} & & C^{2}(\mathcal{U},\mathcal{G}) \arrow{ur}{id} & & &
\end{tikzcd}
\end{equation}
where the operator $[g_{q}]$ is explicit. One thing that this diagram shows is the lost of the structure between each $q$: one can't takes anymore that $d^{q+1}d^{q}=0$ has a nice generalization, therefore a initial cochain can present non-trivial difference functions $g_{q}$ for each $q$\footnote{This fact result in the lost of exactness in the relative cohomology used in Ref. \cite{Car__2017,abramsky_et_al:LIPIcs:2015:5416}, and this is the justification to build a generalization of the construction in Ref. \cite{Abramsky_2012}.}. 

\subsubsection*{Generalized obstruction}

Lets use $\mathcal{G}$, and keep our models non-disturbing as imposed by the flasque beneath the cover condition. The restriction due to the impossibility of negatives restrict us to explore the sections $\mu_{R}^{O^{U}}(x)[x]$ of $\mathcal{G}$ with $x\in O^{U}$. Again, we want to describe is the extension of a section of $\mathcal{G}$, such that the $R$-measure that starts in it also finish in it through the model. Using again the exact sequence of presheaves
\begin{equation}
    0\longrightarrow\mathcal{G}_{\Bar{U}}\longrightarrow\mathcal{G} \xrightarrow{\,\,\,\, p\,\,\,\,}\mathcal{G}|_{U}
\end{equation}
the projection $p$ acts on $[g,c]=[g_{0},c]$ canonically as $p_{U}:C^{q}(\mathcal{U},\mathcal{G})\to C^{q}(\mathcal{U},\mathcal{G}|_{U})::[g,c]\mapsto [g|_{U},p_{U}(c)]$, the action on the projected difference cochain. Such restriction usually doesn't preserve the equivalence relation, since we could have a non-trivial $[g,c]$ such that $[g|_{U},p_{U}(c)]=[Id|_{U},p_{U}(c)]$, and we will use it to define the obstruction. The kernel of $p_{U}$ is the set of difference cochains $[g,c]$ that satisfy $p_{U}[g,c]=[Id|_{U},p_{U}(c)]$\footnote{Without subtraction, one can't reach $0_{R}$ so easily. The difference operator generalize it, but its trivialization is represented by the trivial difference cochain $[Id,c]$. Such generalization is analogous to the impossibility of working with the algebra of the tangent space of a variety, but only directly with the variety elements. If we are working with a module cochain, we can represent the difference cochain as an element $z$ such that $p_{U}dz=0$, impling the usual sequence of presheafs.}, therefore a difference cochain $[g,c]$ of $C^{0}(\mathcal{U},\mathcal{G})$ is a difference cochain of $C^{0}(\mathcal{U},\mathcal{G}_{\Bar{U}})$ if $[g|_{U},p_{U}(c)]=[Id|_{U},p_{U}(c)]$.

Let $c_{j_{0}}$ be the initial section, one that is allowed by the presheaf $\mathcal{G}$. As before, non-disturbance implies the existence of a cochain $c=\left\{c_{j_{k}}\right\}$ such that $c_{j_{0}}|_{j_{0}j_{k}}=c_{j_{k}}|_{j_{0}j_{k}}$ for all $k$, and it defines a unique difference cochain $[g,c]$.

\begin{proposition}
A difference cochain $[g,c]$ as defined above is a difference cochain in $C^{1}(\mathcal{U},\mathcal{G}_{\Bar{U}_{j_{0}}})$.
\end{proposition}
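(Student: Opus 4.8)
The plan is to transport the classical argument for $\gamma(s_{j_0})$ into the semi-field setting, replacing the assertion ``$p_{U_{j_0}}(dc)=0$'' by its difference-operator analogue: that the projected difference cochain is the trivial class. Concretely, I would show that the projection $p=p_{U_{j_0}}$ carries $[g,c]$ to $[Id|_{U_{j_0}},p(c)]$, and then invoke the kernel characterization stated just before the proposition, which says that exactly this triviality is what it means for $[g,c]$ to be a difference cochain of $\mathcal{G}_{\bar{U}_{j_0}}$.

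First I would record that the canonical presheaf map $p$ commutes with both differentials: since each of $d^0_+,d^0_-$ is assembled from restriction maps and $p$ is itself a presheaf morphism, functoriality of restriction gives $p(d^0_{\pm}c)=d^0_{\pm}(p\,c)$, the right-hand differentials now being those of $\mathcal{G}|_{U_{j_0}}$. This is the step that lets the whole computation of $[g|_{U_{j_0}},p(c)]$ take place inside $\mathcal{G}|_{U_{j_0}}$. Next I would identify $p(c)$ explicitly. By the non-disturbance construction the chosen cochain satisfies $c_{j_k}|_{j_0 j_k}=c_{j_0}|_{j_0 j_k}$ for every $k$, so each component of $p(c)$ is $c_{j_0}$ restricted to the relevant intersection; that is, $p(c)$ is entirely generated by the single section $c_{j_0}$.

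From there the argument is short. Because any two components of $p(c)$ are both $c_{j_0}$ restricted to the triple intersection inside $U_{j_0}$, they agree there, so $\{c_{j_0}|_{j_0 j_k}\}$ is a compatible family for $\mathcal{G}|_{U_{j_0}}$. By the generalized first result (the bijection between compatible families and $\Breve{H}^0$) this means $p(c)$ is a $0$-cocycle of $\mathcal{G}|_{U_{j_0}}$, i.e. $d^0_+(p\,c)=d^0_-(p\,c)$. Combined with the commutation step this gives $p(d^0_+c)=p(d^0_-c)$, which is precisely the statement that the difference operator of the projected cochain is the identity, $[g|_{U_{j_0}},p(c)]=[Id|_{U_{j_0}},p(c)]$; the kernel characterization then delivers the claim. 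The implicit use of connectedness of $\mathcal{U}$ enters only through the non-disturbance construction that propagates the equalities $c_{j_k}|_{j_0 j_k}=c_{j_0}|_{j_0 j_k}$ to all $k$.

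The main obstacle is conceptual rather than computational: without subtraction there is no element ``$z=dc$'' whose restriction one can set to zero, so membership in the relative presheaf must be phrased entirely through the difference operator and its equivalence classes. The care therefore lies in two places: (a) checking that $p$ genuinely descends to a well-defined action $[g,c]\mapsto[g|_{U_{j_0}},p(c)]$ on difference cochains, which relies on the uniqueness of the row $R$-stochastic representative established earlier; and (b) recognizing that the vanishing of the classical relative cocycle corresponds exactly to the triviality of the projected difference class $[Id|_{U_{j_0}},p(c)]$. This last identification is what the semi-field hypothesis secures, since it is the product-inversion in $R$ that guarantees the operators $g$ exist in the first place, and hence that ``being in the kernel of $p$'' can be detected by comparing $g|_{U_{j_0}}$ with the identity rather than by a subtraction.
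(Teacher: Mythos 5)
Your argument is correct and is essentially the paper's own proof: both reduce to the computation that, after applying $p_{j_{0}}$, each component of $d^{0}_{+}c$ and of $d^{0}_{-}c$ at $(j_{k},j_{l})$ equals $c_{j_{0}}|_{j_{0}j_{k}j_{l}}$ via the non-disturbance relations $c_{j_{k}}|_{j_{0}j_{k}}=c_{j_{0}}|_{j_{0}j_{k}}$, so the projected difference class is the identity class and the kernel characterization of $\mathcal{G}_{\Bar{U}_{j_{0}}}$ applies. Your explicit remarks on $p$ commuting with the differentials and on well-definedness of the induced action on classes are implicit in the paper but do not change the route.
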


\begin{proof}
Applying $p$ we get $[g|_{j_{0}},p_{j_{0}}(c)]=[Id|_{j_{0}},p_{j_{0}}(c)]$, since for $j_{k}>j_{l}$ we have
\begin{equation}
    p_{j_{0}}\left(d_{+}^{0}c(j_{k})(j_{l})\right)=c_{j_{k}}|_{j_{0}j_{k}j_{l}}=\left(c_{j_{0}}|_{j_{0}j_{k}}\right)|_{j_{l}}=c_{j_{0}}|_{j_{0}j_{k}j_{l}}
\end{equation}
and
\begin{equation}
    p_{j_{0}}\left(d_{-}^{0}c(j_{k})(j_{l})\right)=c_{j_{l}}|_{j_{0}j_{k}j_{l}}=\left(c_{j_{0}}|_{j_{0}j_{l}}\right)|_{j_{k}}=c_{j_{0}}|_{j_{0}j_{k}j_{l}}
\end{equation}
implying that $p_{j_{0}}\left(d_{+}^{0}c(j_{k})(j_{l})\right)=p_{j_{0}}\left(d_{-}^{0}c(j_{k})(j_{l})\right)=p_{j_{0}}\left(g\left(d_{+}^{0}c(j_{k})(j_{l})\right)\right)$, for all $j_{k}$ and $j_{l}$. By exactness we have that $[g,c]$ is a difference cochain in $C^{1}(\mathcal{U},\mathcal{G}_{\Bar{U}_{j_{0}}})$, with $[g,c]=[g_{\Bar{U}_{j_{0}}},c]$.
\end{proof}

The difference cochain $\gamma(c_{j_{0}})=[g_{\Bar{U}_{j_{0}}},c]$ is defined as the obstruction of $c_{j_{0}}$, doing the analogue role of $\gamma(c_{j_{0}})=[z]=[dc]$ in $Z^{1}(\mathcal{U},\mathcal{F}_{\Bar{U}_{j_{0}}})$, that can't be reach in general. We can now prove the main result of the paper.

\begin{theorem}
Let $\mathcal{U}$ be connected, $U_{j_{0}}\in\mathcal{U}$, and $c_{j_{0}}\in\mathcal{G}(U_{j_{0}})$. Then $\gamma(c_{j_{0}})$ is trivial if and only if there is a compatible family $\left\{r_{j_{k}}\in\mathcal{G}(U_{j_{k}})\right\}_{U_{j_{k}}\in\mathcal{U}}$ such that $c_{j_0}=r_{j_0}$.
\end{theorem}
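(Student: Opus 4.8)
The plan is to follow the blueprint of the analogous proposition for $\mathcal{F}$, replacing the additive obstruction $[dc]$ and the subtraction $c-r$ by the multiplicative difference operator, the only device available over a semi-field. Throughout I keep the non-disturbing extension $c=\{c_{j_k}\}$ of $c_{j_0}$ furnished by the flasque-beneath-cover condition, for which the previous proposition already supplies $p_{j_0}(d^0_+ c)=p_{j_0}(d^0_- c)$, so that $\gamma(c_{j_0})=[g_{\Bar{U}_{j_0}},c]$ genuinely lives in $C^1(\mathcal{U},\mathcal{G}_{\Bar{U}_{j_0}})$.

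For the implication from a compatible family to triviality of the obstruction, I would first observe that a compatible family is exactly a cocycle: the equations $r_{j_k}|_{j_k j_l}=r_{j_l}|_{j_k j_l}$ are precisely $d^0_+ r=d^0_- r$, so $[Id,r]$ is its difference cochain. Since $r_{j_0}=c_{j_0}$, the family $r$ agrees with $c_{j_0}$ on every $U_{j_0}\cap U_{j_k}$ and is therefore itself an admissible non-disturbing extension, so I may take $c=r$, for which $\gamma(c_{j_0})=[Id,r]$ is manifestly trivial. To see that this is independent of the chosen extension, I would note that any two non-disturbing extensions of $c_{j_0}$ share the same $U_{j_0}$-restrictions, hence the same $p_{j_0}$-projected differentials, so by the uniqueness of the row-$R$-stochastic representative they determine the same relative difference cochain; triviality for $c=r$ therefore forces triviality of $\gamma(c_{j_0})$ for every extension.

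The converse implication, from a trivial obstruction to a compatible family, is where the real work lies. Triviality of $[g_{\Bar{U}_{j_0}},c]$ means the row-$R$-stochastic cochain $g$ relating $d^0_+ c$ and $d^0_- c$ is equivalent to the identity once the $U_{j_0}$-projection is quotiented out, i.e.\ $g$ departs from $Id$ only along the kernel presheaf $\mathcal{G}_{\Bar{U}_{j_0}}$. From this I would reconstruct a compatible family by transporting the sections of $c$ into pairwise agreement through $g$: on each edge $(U_{j_k},U_{j_l})$ the operator prescribes how $c_{j_k}|_{j_k j_l}$ and $c_{j_l}|_{j_k j_l}$ are to be identified, and since this prescription is the identity after restriction to $U_{j_0}$, the adjusted sections still restrict to $c_{j_0}$ on every intersection with $U_{j_0}$. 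Connectedness of $\mathcal{U}$ is then used to propagate these edgewise identifications into a single globally coherent choice $\{r_{j_k}\}$, yielding a compatible family with $r_{j_0}=c_{j_0}$.

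The main obstacle I anticipate is exactly this reconstruction: over a semi-field one cannot form $c-w$ and read off a cocycle, so one must show that the multiplicative data recorded by a trivial difference cochain integrate into honest $R$-measures $r_{j_k}$ that are simultaneously $R$-stochastic, agree on every double overlap, and are mutually compatible on triple overlaps. Compatibility on triple overlaps is the delicate point, because---as the diagram following the definition of the difference operator makes explicit---the identity $d^{q+1}d^q=0$ no longer has a clean generalization, so consistency across simplices of different dimension must be imposed directly rather than inherited from the complex. Here the uniqueness of the row-$R$-stochastic representative and the connectedness of the nerve are the two levers I would rely on to force the edgewise identifications to cohere into a single global family.
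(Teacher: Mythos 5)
Your easy direction (compatible family $\Rightarrow$ trivial obstruction) matches the paper: take the extension $c$ to be the family $r$ itself, so the difference cochain is $[Id,r]$ and the obstruction is manifestly trivial. But your side argument for independence of the chosen extension is a non sequitur: two non-disturbing extensions of $c_{j_0}$ agreeing on every $U_{j_0}\cap U_{j_k}$ have the same $p_{j_0}$-\emph{projected} differentials, whereas the relative difference cochain lives in the \emph{kernel} of $p_{j_0}$, about which this says nothing --- on an edge $(U_{j_k},U_{j_l})$ with $k,l\neq 0$ two extensions can differ arbitrarily. (The paper sidesteps this by working throughout with the one extension $c$ used to define $\gamma(c_{j_0})$.)

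The genuine gap is in your converse. You read triviality of $\gamma(c_{j_0})=[g_{\Bar{U}_{j_0}},c]$ as ``$g$ is equivalent to the identity once the $U_{j_0}$-projection is quotiented out, i.e.\ $g$ departs from $Id$ only along $\mathcal{G}_{\Bar{U}_{j_0}}$.'' That is backwards: $\mathcal{G}_{\Bar{U}_{j_0}}$ is the kernel of $p_{U_{j_0}}$, so triviality of the obstruction asserts that $g$ acts as the identity \emph{on} that kernel part, while the preceding proposition already gives $[g|_{U_{j_0}},p_{U_{j_0}}(c)]=[Id|_{U_{j_0}},p_{U_{j_0}}(c)]$, i.e.\ the identity on the projected part. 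Exactness of $0\to\mathcal{G}_{\Bar{U}_{j_0}}\to\mathcal{G}\to\mathcal{G}|_{U_{j_0}}$ then yields $[g,c]=[Id,c]$ outright, hence $d^{0}_{+}c=d^{0}_{-}c$: the extension $c$ you already hold \emph{is} the compatible family, with $r_{j_0}=c_{j_0}$ by construction. Because of the swap you instead undertake to ``transport the sections of $c$ into pairwise agreement through $g$'' and to propagate edgewise identifications by connectedness --- a reconstruction you yourself concede you cannot close on triple overlaps, and for which nothing in your text actually produces sections $r_{j_k}$ from the operator data. That reconstruction is both unnecessary and, as written, a hole; the repair is to drop it and argue via exactness as above.
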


\begin{proof}
The proof follows the diagram
\begin{equation}
    \begin{tikzcd}[row sep=small]
C^{0}(\mathcal{U},\mathcal{G}|_{U_{j_0}}) \arrow{rr}{d^{0}_{+}|_{U_{j_0}}} \arrow{dr}{d^{0}_{-}|_{U_{j_0}}} & & C^{1}(\mathcal{U},\mathcal{G}|_{U_{j_0}}) \arrow{dl}{[g|_{U_{j_{0}}}]}\\
& C^{1}(\mathcal{U},\mathcal{G}|_{U_{j_0}}) & \\
C^{0}(\mathcal{U},\mathcal{G}) \arrow{uu} \arrow{rr}[near end]{d^{0}_{+}} \arrow{dr}{d^{0}_{-}} & & C^{1}(\mathcal{U},\mathcal{G}) \arrow{uu} \arrow{dl}{[g]}\\
& C^{1}(\mathcal{U},\mathcal{G}) \arrow{uu} & \\
C^{0}(\mathcal{U},\mathcal{G}_{\Bar{U}_{j_0}}) \arrow{uu} \arrow{rr}[near end]{d^{0}_{+\Bar{U}_{j_0}}} \arrow{dr}{d^{0}_{-\Bar{U}_{j_0}}} & & C^{1}(\mathcal{U},\mathcal{G}_{\Bar{U}_{j_0}}) \arrow{uu} \arrow{dl}{[g_{\Bar{U}_{j_0}}]}\\
& C^{1}(\mathcal{U},\mathcal{G}_{\Bar{U}_{j_0}}) \arrow{uu} & 
\end{tikzcd}
\end{equation}
Suppose that $\gamma(c_{j_{0}})$ is trivial. Then there is a cochain $c\in C^{0}(\mathcal{U},\mathcal{G}_{\Bar{U}_{j_{0}}})$ such that $[g_{\Bar{U}_{j_{0}}},c]=[Id_{\Bar{U}_{j_{0}}},c]$. As $[g_{\Bar{U}_{j_{0}}},c]=[g,c]$, and $[g|_{U_{j_{0}}},p_{U_{j_{0}}}(c)]=[Id|_{U_{j_{0}}},c]$, by exactness we have $[g,c]=[Id,c]$. This last relation implies that $d^{0}_{+}c=d^{0}_{-}c$, i.e., $c=\left\{r_{j_{k}}\in\mathcal{G}(U_{j_{k}})\right\}_{U_{j_{k}}\in\mathcal{U}}$ is a compatible family, and as $c$ is constructed from $c_{j_{0}}$, we have $r_{j_{0}}=c_{j_{0}}$. In the diagram, if the bottom of the diagram is trivial, then the middle is trivial, since the top is trivial as well.

Now, suppose there is a compatible family $c=\left\{r_{j_{k}}\in\mathcal{G}(U_{j_{k}})\right\}_{U_{j_{k}}\in\mathcal{U}}$ with $r_{j_{0}}=c_{j_{0}}$. By the triviality of $[g,c]$ that follows from the compatibility of the family, and the exactness of the sequence of presheaves, we get $[g_{\Bar{U}_{j_{0}}},c]=[Id_{\Bar{U}_{j_{0}}},c]$, and therefore we have $\gamma(c_{j_{0}})=[g_{\Bar{U}_{j_{0}}},c]$ trivial. In the diagram, if the middle of the diagram is trivial, than the bottom is trivial.
\end{proof}

If we suppose that the semi-ring $R$ is cancellative, thus admitting a representation as a positive cone of a ring $S$, we can recover the usual use of cohomology to characterize contextual behavior once liberating the use of negatives. First, the relation between coboundary maps is recovered in a unique $S$-module of coboundaries, bringing all the formalism presented before. In both we start with a cochain $c$ codifying the $R$-measure, and identify
\begin{equation}
    z=dc=d^{0}_{+}c-d^{0}_{-}c
\end{equation}
where $z$ is the representation of the difference cochain $(g_{0},c)$ in $C^{1}(\mathcal{U},\mathcal{F})$, related by 
\begin{equation}
    z=d^{0}_{+}c-d^{0}_{-}c=(Id-g_{0})d^{0}_{+}c.
\end{equation}
While a cocycle as $z\in Z^{1}(\mathcal{U},\mathcal{F})$ is unique up to sum with a coboundary, thus defining a cohomological class, the difference cochain also presents uniqueness up to an equivalence class of operators. Finally, the obstruction $[z]$ is only trivial if and only if the generalized obstruction is trivial, once
\begin{equation}
        [z]=[d^{0}_{+}c-d^{0}_{-}c]=[(Id-g_{0})]d^{0}_{+}c
\end{equation}
holds, implying in $[z]=0$ if and only if $[(Id-g_{0})]=0$, or equivalently $[Id]=[g_{0}]$. The equivalence of the theorems if we allow the use of negatives follows directly of these identifications. 

\subsubsection*{$R$-contextuality}

The trivialization of all the obstructions of a model implies the extendability of all the local sections. In this sense, it is the verification of a paradox: the non-existence of an extension to a global section of a local section implies non-trivial obstruction of this section; on the other hand, non-trivial obstructions result in a paradox, therefore $R$-contextual behavior.

For this holds, $\mathcal{G}$ must preserve the data of the $R$-empirical model as a family of local sections of $\mathcal{D}_{R}\mathcal{E}$. It is the idea of $\mathcal{G}$ so far: the $R$-measures of each context of a model define an element of $C^{0}(\mathcal{U},\mathcal{G})$ uniquely, as a section of $\mathcal{G}$, with the free semi-module defined by the outcome sets and the semi-field $R$. The model is translated as a cochain of vectors of a $R$-semi-module that satisfies the normalization condition, with a basis of local events and coefficients as the $R$-measure of each of them. We have limited access to sections of $\mathcal{G}$, so we work just with the projected version of this cochain on the basis of the free semi-module.

\begin{corollary}
A model is $R$-contextual if and only if there is a local section $s_{i}$ representing a local event by $\mathcal{G}$ with non-trivial obstruction.
\end{corollary}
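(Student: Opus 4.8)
The plan is to deduce the corollary from the main theorem together with the Fine--Abramsky--Brandenburger characterization recalled in section \ref{Presheaf}, under which $R$-contextuality is the nonexistence of a global $R$-measure $\mu_{R}^{O^{X}}$ marginalizing to each context. I would argue the contrapositive-equivalent statement: a model is $R$-non-contextual precisely when every admissible local section $s_{i}=\mu_{R}^{O^{U}}(x)[x]$, one representing a single local event in the support of the model, has trivial obstruction $\gamma(s_{i})$. The main theorem already equates triviality of $\gamma(s_{i})$ with the existence of a compatible family $\{r_{j_{k}}\}$ in $\mathcal{G}$ with $r_{U}=s_{i}$, so the remaining content is to match these compatible families with the global $R$-measure. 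Because the restriction forces us to probe $\mathcal{G}$ only through the basis elements $[x]$, it suffices to test single-event sections; the full state on each context is recovered as the sum $\mu_{R}^{O^{U}}=\sum_{x\in O^{U}}\mu_{R}^{O^{U}}(x)[x]$, and this decomposition is the link between testing events one at a time and extending the state as a whole.

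First I would treat the implication from non-contextuality to trivial obstructions, which is the contrapositive of one half of the corollary. Assuming a global $R$-measure $\mu_{R}^{O^{X}}$ that marginalizes to the model, every local event $x$ with $\mu_{R}^{O^{U}}(x)\neq 0_{R}$ arises as a marginal of global events in its support; restricting $\mu_{R}^{O^{X}}$ to each context yields a family agreeing on all intersections by construction, hence a compatible family in $\mathcal{G}$ extending $s_{i}$, so by the main theorem $\gamma(s_{i})$ is trivial. Conversely, assuming every $s_{i}$ has trivial obstruction, the main theorem supplies for each a compatible family of single events; by the third condition imposed on $\mathcal{G}$, such a compatible family of basis events induces a unique global section of $\mathcal{E}$, i.e. a global event restricting to $x$ on $U$. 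Assembling these global events and transporting the coefficients $\mu_{R}^{O^{U}}(x)$ along them should produce a normalized $R$-measure on $\mathcal{E}(X)$, with non-disturbance forcing the transported weights to agree between overlapping contexts, so that the assembled measure marginalizes back to the model and witnesses $R$-non-contextuality through the commuting diagram of section \ref{Presheaf}.

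The main obstacle is exactly this assembly step, and it is where the semi-field hypothesis is indispensable. Several distinct global events may restrict to the same local event $x$, so distributing $\mu_{R}^{O^{U}}(x)$ over them is not canonical, and without subtraction one cannot balance the weights by taking differences. I expect to handle this using the uniqueness of the representation of the state in $C^{0}(\mathcal{U},\mathcal{G})$ noted in the $R$-contextuality discussion, together with the product inversion of the semi-field to renormalize the conditional weights along the difference operator: the class $[g,c]$ is precisely the invariant recording the admissible redistributions, so its triviality is what certifies that a globally consistent choice of weights exists. Making this precise --- that triviality of all the classes $\gamma(s_{i})$ is equivalent to solvability of the global normalization over the fibers of the restriction maps --- is the delicate point, whereas the reduction to single events and the appeal to the main theorem are routine.
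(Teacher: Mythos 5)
Your overall route is the same as the paper's: reduce the corollary to the main theorem via the Fine--Abramsky--Brandenburger identification of $R$-non-contextuality with the existence of a global $R$-measure, restricting attention to single-event sections $\mu_{R}^{O^{U}}(x)[x]$. Your first direction is sound and matches the paper: a global $R$-measure restricts, for each supported event $x$, to a family of marginals of the partial sum $\sum_{\lambda|_{U}=x}p(\lambda)[\lambda]$, which is a compatible family in $\mathcal{G}$ extending $s_{i}$, so the main theorem gives triviality of $\gamma(s_{i})$.

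The converse direction has a genuine gap, part of which you have flagged yourself. First, a local misstep: the compatible family $\{r_{j_{k}}\}$ supplied by the main theorem consists of arbitrary elements of the free semi-modules $\mathcal{G}(U_{j_{k}})$, i.e.\ $R$-linear combinations of events, not single basis events; so the third condition on $\mathcal{G}$, which only concerns compatible families of basis elements, cannot be invoked to produce ``a unique global section of $\mathcal{E}$'' as you do. Second, and more seriously, the assembly step you defer is the entire content of this direction: knowing that each section $\mu_{R}^{O^{U}}(x)[x]$ separately admits a compatible family does not by itself yield a single global $R$-measure whose marginals reproduce every context simultaneously --- the weights attached to different events must be chosen coherently over the fibers of the restriction maps, which amounts to solving the system $Mg=p$ over $R$ (exactly the linear program the paper mentions in its Commentaries). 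Your appeal to ``product inversion of the semi-field to renormalize'' names a tool but is not an argument. To be fair, the paper's own proof does not fill this in either: it simply asserts that $R$-non-contextuality is equivalent to feasibility of every accessible section as an $R$-weighted combination of global sections, and identifies that feasibility with triviality of the obstruction. So you have correctly located where the real work lies, but as written the proposal does not complete it, and the step you leave open is precisely the one that cannot be waved through.
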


\begin{proof}
If the model is $R$-non-contextual, then any section admits extension to a global section, i.e., it is feasible as a linear combination of $R$ weighted global sections of $\mathcal{E}(X)$. Therefore all accessible obstructions are trivial. The negative of this reasoning implies that if any obstruction is non-trivial, then we have $R$-contextual behavior.

If it is contextual, then it is not feasible as a linear combination of $R$ weighted global sections of $\mathcal{E}(X)$. In other words, there is a local section that cannot be written as marginalization in its context of a combination of global events, in the sense of weights in $R$ forming a hidden-variable model. So there will be a non-trivial obstruction for this event.
\end{proof}

This result also generalizes the use of groups to explore contextual models \cite{Okay_2020,Sidiney_2021}. The failures of a formalism resulted from the imposition of a group structure are not present in this generalized cohomological approach.

\section{Framework of effect algebras}
\label{Effect}

A similar result already exists in the level of effect algebras \cite{Roumen_2017}. I will present the main ideas and compare the two results.

\begin{definition}
An effect algebra is a set $A$ with two operations $\vee:A\times A\to A::(a,b)\mapsto a\vee b$ and $\bot:A\to A::a\mapsto a^{\bot}$, and elements $0,1\in A$ such that
\begin{itemize}
    \item if $a\vee b$ is defined, then so $b\vee a$ and $a\vee b=b\vee a$;
    \item if $a\vee b$ and $(a\vee b)\vee c$ are defined, then so $b\vee c$ and $a\vee (b\vee c)$, and $(a\vee b)\vee c=a\vee (b\vee c)$;
    \item $0\vee a$ is defined and $a=0\vee a$;
    \item for all $a\in A$, $a^{\bot}$ is the unique element that satisfies $a^{\bot}\vee a=1$;
    \item if $a\vee 1$ is defined, then $a=0$.
    \item 
\end{itemize}
\end{definition}

\subsubsection*{Equivalence}

The first point is to turn explicit the relationship between the image of the functor of events $\mathcal{E}$ of the scenario, and the algebra of effects. This relation is given by Ref. \cite{10.1007/978-3-662-47666-6_32}, where the authors construct a faithful and full functor between the category of effect algebras and the category of measurement scenarios via objects called tests.

\begin{definition}
A test of a effect algebra $A$ is a finite set $\{a_{i}\in A\}_{i\in I}$, such that $\bigvee_{i}a_{i}$ is defined and $\bigvee_{i}a_{i}=1$. It is called an $n$-test if $|I|=n$.
\end{definition}

This is exactly the definition of measurement in a measurement scenario. Such functor can be seen by the use of contexts. A context defines a local measurement for all its local events, thus a finite $\sigma$-algebra, and it defines a Boolean effect algebra. It is known that effect algebras are naturally non-disturbing, see Ref. \cite{Wester_2018}, which implies that we get an effect algebra that can be decomposed as a union of Boolean algebras from a non-disturbing measurement scenario. The intuition follows the identification of the sets of local events on each context as a Boolean algebra, thus an effect algebra, and the non-disturbance condition glues them in a measurement scenario that defines an effect algebra. In the other way, we can define the tests of an effect algebra, and it already defines a measurement scenario. The contexts can be defined by the notion of simulability \cite{Guerini_2017}, the existence of a measurement to describe all the measurements of the context, therefore the identification of Boolean effect sub-algebras.

\subsubsection*{Cyclic cohomology}

I will give just an informal description of the cohomologies used to characterize contextual behavior, focusing on analogous points in the construction with the previous framework. For a deep and formal description, including the standard tool of cohomology theory in this framework, see Ref. \cite{Roumen_2017}.

For effect algebras, the cyclic cohomology $HC$ is defined as the relation between different possible tests. The first element of the complex is the set of $2$-tests, which by definition of test is isomorphic to the set of effects. The first group is
\begin{equation}
    HC^{1}(A)\cong\{\alpha:A\to\mathbb{R};\alpha(a\vee b)=\alpha(a)+\alpha(b), \alpha(a^{\bot})=-\alpha(a)\},
\end{equation}
that is the set of all maps from the effect algebra $A$ to the reals that satisfy the known property of a measure of union of disjoint subsets, and one that uses negatives. The author of Ref. \cite{Roumen_2017} explored the relation of the elements of $HC^{1}(A)$ and the possible states on $A$, indicating some limitations, like the fact that effect algebras of projections on a Hilbert space don't satisfy the canonical relation between them.

The next step is analogous to the strategy of Čech cohomology on modules, to use relative cohomology to inquiry about the factorizability. One asks for the realization of the effect algebra through inclusion in a Boolean effect algebra $\mathcal{P}(X)$\footnote{A Boolean effect algebra is a Boolean algebra, thus it can be represented as the algebra of subsets of a set $X$, represented as the power set $\mathcal{P}(X)$.}
\begin{equation}
    \begin{tikzcd}
A \arrow{rr}{\sigma} \arrow[hookrightarrow]{dr}{i} & & \left[0,1\right] \\
& \mathcal{P}(X) \arrow{ur}{\eta} &
\end{tikzcd}
\end{equation}
Therefore is the identification of a set $X$ and a state $\eta$, seen as classical, that give the same probabilities of the algebra $A$. The strategy is to use a restriction on the possible effect algebras to the ones that present the canonical relation between states and elements of $HC^{1}(A)$, and with relative cohomology construct a condition to non-factorizability.

\begin{theorem}
Let $i:A\hookrightarrow B$ be a weak injective morphism between finite Archimedean interval effect algebras, and let $\sigma:A\to [0,1]$ be a state. If $\sigma$ extends to a state $\tau:B\to [0,1]$ for which $\tau\circ i=\sigma$, then the cohomology class $\partial (j(\sigma))\in HC^{2}(A,B)$ is zero.
\end{theorem}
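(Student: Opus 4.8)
The plan is to recognize this as the elementary direction of an obstruction theorem: the relative group $HC^{2}(A,B)$ carries a connecting homomorphism $\partial$ fitting into the long exact sequence of the pair, and the content of the statement is simply that a class which lifts is annihilated by $\partial$. Concretely, I would isolate the exact segment
\begin{equation}
HC^{1}(B)\xrightarrow{\,i^{*}\,}HC^{1}(A)\xrightarrow{\,\partial\,}HC^{2}(A,B),
\end{equation}
so that $\partial(x)=0$ for $x\in HC^{1}(A)$ holds exactly when $x$ lies in the image of the restriction map $i^{*}$ induced by $i$. The entire argument then reduces to exhibiting a class in $HC^{1}(B)$ whose restriction is $j(\sigma)$, which is precisely what the extension $\tau$ should provide.

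First I would fix the map $j$ carrying a state to its class in $HC^{1}(A)$. For a finite Archimedean interval effect algebra the representation theorem identifies states with the additive functionals appearing in the description of $HC^{1}(A)\cong\{\alpha:A\to\mathbb{R};\,\alpha(a\vee b)=\alpha(a)+\alpha(b),\,\alpha(a^{\bot})=-\alpha(a)\}$, so $\sigma$ determines a $1$-cocycle and $j(\sigma)$ is its cohomology class. The hypothesis that both $A$ and $B$ are finite Archimedean interval effect algebras is precisely what makes this identification available on each side, while weak injectivity of $i$ is what renders $i$ a genuine inclusion for which $i^{*}$ and the relative complex $HC^{2}(A,B)$ are defined.

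Next I would check naturality of $j$ along $i$, namely that $i^{*}(j(\tau))=j(\tau\circ i)$. Since $i^{*}$ is induced by precomposition with $i$ at the cochain level, and $j(\tau)$ is represented by the cocycle built from $\tau$, its restriction is represented by the cocycle built from $\tau\circ i$; the extension hypothesis $\tau\circ i=\sigma$ then yields $i^{*}(j(\tau))=j(\sigma)$ immediately. Hence $j(\sigma)$ lies in the image of $i^{*}$, and by exactness $\partial(j(\sigma))=0$. Equivalently, in the explicit cocycle computation of $\partial$ one lifts the cocycle $\sigma$ to a cochain on $B$ and applies the differential: the point is that $\tau$ supplies a lift which is already a cocycle, so its differential vanishes and represents $\partial(j(\sigma))$ as zero.

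The main obstacle is this second step. One must verify that the extending state $\tau$ genuinely descends to a well-defined class $j(\tau)\in HC^{1}(B)$ and that the naturality square for $j$ commutes, since $j$ is built from the state–cocycle correspondence rather than being functorial a priori; here the Archimedean interval hypothesis does the real work, guaranteeing that $\tau$ represents an honest element of $HC^{1}(B)$ and not merely an additive map on effects. Once this compatibility is secured, the conclusion is a formal consequence of exactness, exactly parallel to the vanishing of the Čech obstruction $\gamma(s_{j_0})$ when the local section extends to a compatible family.
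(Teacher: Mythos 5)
The first thing to say is that the paper does not prove this theorem at all: it is quoted without proof from Ref.~\cite{Roumen_2017} as part of a survey of the effect-algebra approach, so there is no internal argument to compare yours against. That said, your proof is the standard one and is essentially the argument of the cited source: place $j(\sigma)$ in the exact segment $HC^{1}(B)\xrightarrow{\,i^{*}\,}HC^{1}(A)\xrightarrow{\,\partial\,}HC^{2}(A,B)$ coming from the long exact sequence of the pair, exhibit $j(\tau)$ as a preimage via $i^{*}(j(\tau))=j(\tau\circ i)=j(\sigma)$, and conclude $\partial(j(\sigma))=0$ by exactness. The outline is sound, and your closing remark that this is the exact analogue of the vanishing of $\gamma(s_{j_0})$ for an extendable section is the comparison the paper itself is driving at.

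The one step that deserves more than the gesture you give it is the definition and naturality of $j$. A state $\sigma$ is \emph{not} literally an element of the displayed model of $HC^{1}(A)$, since $\sigma(a^{\bot})=1-\sigma(a)\neq-\sigma(a)$; the paper even warns that the ``canonical relation'' between states and $HC^{1}$ has limitations. So $j$ is not an inclusion of states into cocycles but the comparison map built from the representation of a finite Archimedean interval effect algebra as the unit interval of an ordered abelian group, under which a state extends to a group homomorphism to $\mathbb{R}$ whose class is $j(\sigma)$. The naturality square you invoke must be verified for that specific map --- that the group-level extension of $\tau\circ i$ coincides with the restriction along $i$ of the group-level extension of $\tau$. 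You correctly flag this as the crux and correctly attribute its availability to the Archimedean interval hypothesis, but you assert it rather than verify it; once it is established, the rest is, as you say, formal exactness.
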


Here the map $j$ codifies the canonical relation between states and elements of $HC^{1}(A)$Even for this restriction in the effect algebras, violations due to the use of negatives occur, therefore the theorem is sufficient but not necessary condition. And as the example, the author used the fact that we can extend $\sigma$ to a signed state if and only if the coboundary of a state $\partial (j(\sigma))$ is zero.

\subsubsection*{Order cohomology}

The solution proposed was to restrict the problem to the positive cone of the reals, and hence taking order into account. The cohomology is thus defined in an ordered partial commutative monoid, which all the effect algebras already are.

\begin{definition}
An ordered partial commutative monoid is a partial commutative monoid $A$ equipped with a positive cone $P\subseteq A$, for wihich
\begin{itemize}
    \item $0\in P$;
    \item if $a,b\in P$ and $a\vee b$is defined, then $a\vee b\in P$;
    \item for $a,b\in P$, if $a\vee b=0$, then $a=b=0$.
\end{itemize}
\end{definition}

The map between effect algebras is now understood as maps between positive cones, thus avoiding negatives, and the kernels are extended to precones in the definition of cohomology, which defines commutative monoids $H_{\leq}^{n}(A)$ of an algebra $A$.

\begin{definition}
Let $f:A\to B$ be a morphism between ordered partial commutative monoids. The precone of $f$ is $prec(f)=f^{-1}(B^{+})\subseteq A$.
\end{definition}

This imposition turns order cohomology hard to calculate in relation to cyclic cohomology. Some properties of the cyclic cohomology are lost, as exactness in the relation with relative cohomological monoids, as expected. But it allows writing the first group as
\begin{equation}
    H^{1}_{\leq}\cong\{\alpha:A\to\mathbb{R}_{\geq 0};\alpha(a\vee b)=\alpha(a)+\alpha(b)\},
\end{equation}
without the negative condition, only the condition of measure on disjoint subsets. It includes the possible states, thus it isn't necessary to restrict the kinds of effect algebras as before. 

\begin{theorem}
Let $i:A\hookrightarrow B$ be an injective morphism of effect algebras, and let $\sigma:A\to [0,1]$ be a state. The following are equivalent:
\begin{itemize}
    \item The state $\sigma$ extends to a state $\tau$ on $F$, for which $\tau\circ i=\sigma$.
    \item The state $\sigma$ lies in the precone of the connecting homomorphism $\partial:H_{\leq}^{1}(A)\to H_{\leq}^{2}(B,A)$.
\end{itemize}
\end{theorem}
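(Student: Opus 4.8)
The plan is to read this as the order-cohomology refinement of the cyclic-cohomology theorem above, with the group condition $\partial(j(\sigma))=0$ replaced by the precone condition $\sigma\in\mathrm{prec}(\partial)$. First I would invoke the long exact sequence of the pair $(B,A)$ in order cohomology, whose relevant segment is
\begin{equation}
H^{1}_{\leq}(B)\overset{i^{*}}{\longrightarrow}H^{1}_{\leq}(A)\overset{\partial}{\longrightarrow}H^{2}_{\leq}(B,A),
\end{equation}
where $i^{*}$ is the restriction $\tau\mapsto\tau\circ i$ induced by the inclusion. Using the identification $H^{1}_{\leq}(A)\cong\{\alpha:A\to\mathbb{R}_{\geq 0};\,\alpha(a\vee b)=\alpha(a)+\alpha(b)\}$, the state $\sigma$ is exactly an element of $H^{1}_{\leq}(A)$, and since effect-algebra morphisms preserve $1$ any cocycle on $B$ restricting to $\sigma$ is automatically normalized, $\tau(1_{B})=\tau(i(1_{A}))=\sigma(1_{A})=1$, so normalization is never an obstruction.

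For the forward implication I would argue directly from exactness. If $\sigma$ extends to a state $\tau$ on $B$ with $\tau\circ i=\sigma$, then $\sigma=i^{*}(\tau)$ lies in the image of $i^{*}$, and by the precone-exactness at $H^{1}_{\leq}(A)$ the connecting map sends this image into the positive cone of $H^{2}_{\leq}(B,A)$---in fact to $0$---so $\partial(\sigma)\in\big(H^{2}_{\leq}(B,A)\big)^{+}$, which is precisely $\sigma\in\mathrm{prec}(\partial)$. This is the positive-cone reading of the cyclic theorem and costs nothing beyond invoking the sequence.

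The substantive direction is the converse. Here I would unwind the connecting homomorphism: lift the $1$-cocycle $\sigma$ on $A$ to a $1$-cochain $\tilde{\sigma}$ on $B$, apply the coboundary to form $d\tilde{\sigma}$, and note that because $\sigma$ is a cocycle the relative class $[d\tilde{\sigma}]$ represents $\partial(\sigma)$ in $H^{2}_{\leq}(B,A)$. The hypothesis $\sigma\in\mathrm{prec}(\partial)$ means $\partial(\sigma)$ admits a representative lying in the positive cone of relative $2$-cochains, and the crux is to turn this positive representative into a correction of $\tilde{\sigma}$ producing a genuine $1$-cocycle $\tau$ on $B$ that stays inside the positive cone, hence is an honest state rather than a signed one. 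By the normalization remark $\tau$ is then automatically a state, and $\tau\circ i=\sigma$ by construction, closing the equivalence.

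I expect the main obstacle to be exactly this positive lifting in the converse: establishing that a positive representative of $\partial(\sigma)$ can be realized as the coboundary of a \emph{positive} $1$-cochain on $B$. This is the precone analogue of exactness---that $\mathrm{im}(i^{*})$ equals $\mathrm{prec}(\partial)$ rather than merely $\ker(\partial)$---and proving it requires carrying positivity through the connecting map in a snake-lemma-type chase performed in the category of ordered partial commutative monoids, where kernels are replaced by precones throughout. This is precisely the step that fails in cyclic cohomology, where one recovers only $[d\tilde{\sigma}]=0$ and therefore only a signed extension; the order structure is introduced exactly so that the positive correction becomes available.
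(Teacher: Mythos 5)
First, a point of context: the paper does not prove this theorem at all. It is quoted verbatim (with a typo, ``$F$'' for ``$B$'') from the effect-algebra cohomology literature (Roumen's work, cited as \cite{Roumen_2017}), and the surrounding section explicitly declares itself ``just an informal description.'' So there is no in-paper proof to compare against; your proposal has to be judged on its own.

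On that basis, there is a genuine gap. Your forward implication is fine: it needs only $\partial\circ i^{*}=0$ and the fact that $0$ lies in the positive cone, so any $\sigma$ in the image of $i^{*}$ satisfies $\partial(\sigma)\in\bigl(H^{2}_{\leq}(B,A)\bigr)^{+}$. But the converse --- the entire substantive content of the theorem --- is not proven; it is only named. You write that ``the crux is to turn this positive representative into a correction of $\tilde{\sigma}$ producing a genuine $1$-cocycle $\tau$ on $B$ that stays inside the positive cone'' and that this ``requires carrying positivity through the connecting map in a snake-lemma-type chase,'' but you never carry out that chase. Worse, the tool you propose to lean on is not available in the form you invoke it: the paper itself notes that in order cohomology ``some properties of the cyclic cohomology are lost, as exactness in the relation with relative cohomological monoids.'' There is no long exact sequence of ordered partial commutative monoids whose ``precone-exactness'' you may simply cite; the statement $\mathrm{im}(i^{*})=\mathrm{prec}(\partial)$ at $H^{1}_{\leq}(A)$ \emph{is} the theorem (restricted to states), so assuming it as an ambient exactness property is circular. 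A complete argument would have to work with explicit representatives: describe $H^{2}_{\leq}(B,A)$ and its positive cone concretely in low degree, show that positivity of the relative class $[d\tilde{\sigma}]$ forces the existence of a positive $1$-cochain on $B$ with the same coboundary modulo $A$, and verify that the resulting cocycle is normalized and restricts to $\sigma$. None of that is present, so the proposal is an accurate road map of what must be shown rather than a proof.
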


This result is the characterization of contextual behavior, but codifying the measurement scenario in effect algebras.

\subsubsection*{Relation with sheaf approach}

The method to get the sufficient condition with the use of relative cohomology with coefficients in a ring is similar, but the theorem for cyclic cohomology in effect algebras is more restrictive than one for Čech cohomology in a presheaf. In both frameworks, the violations appear due to the use of negatives. Even the main example of a violation is the same, the equivalence between non-disturbing models (or equivalently effect algebras) and the possibility of use of signal measures. It justifies the use of cohomological tools from sheaf theory in other rings than $\mathbb{R}$, like the Boolean one \cite{abramsky2020classical,abramsky_et_al:LIPIcs:2021:13439}, but in effect algebras only $\mathbb{R}$ is explored.

For the sheaf approach, we start with a semi-ring, thus the canccelative rule can not hold. Such restriction imposes the codification of the obstruction to a pair of objects, that can be calculated to each local context. It is the triviality of all obstructions that implies a non-contextual non-disturbing model. Using outcome determinism one can show that factorizability is the condition to non-contextuality, and non-disturbance implies that the hidden variables are the global sections. Thus contextuality follows from the possibility of extension to a measurement scenario with just one maximal context, and with the global sections identified as the possible events. It also implies the identification of the model as construction from a Boolean effect algebra.

On the other hand, effect algebras can directly be compared to any other via extendability, and in special to Boolean effect algebras. There is no previous construction that says the right effect algebra to use, or even that contextuality follows from the Boolean one. It is the equivalence to the empirical model by the $n$-tests that allows us to talk about contextuality in this framework. 

\begin{proposition}
There is an equivalence between outcome determinism in the sheaf approach and the injective morphism to a Boolean effect algebra in the effect algebra approach.
\end{proposition}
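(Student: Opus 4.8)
The plan is to pass through the faithful and full functor between the category of effect algebras and the category of measurement scenarios that is built via tests, established in Ref.~\cite{10.1007/978-3-662-47666-6_32}. Under this dictionary, a non-disturbing scenario with sheaf of events $\mathcal{E}$ corresponds to an effect algebra $A$ obtained by gluing the Boolean effect algebras attached to each context along the non-disturbance condition. With this correspondence in place, the proposition reduces to matching two descriptions of determinism: on the sheaf side, the statement that the hidden variables are exactly the global events; on the effect-algebra side, the existence of an injective morphism $i:A\hookrightarrow\mathcal{P}(X)$ into a Boolean effect algebra. The whole argument is then organized as a translation of one condition into the other through this functor.

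First I would fix the candidate set $X$. By the earlier Proposition, a non-disturbing $R$-empirical model satisfying outcome determinism has as its hidden variables exactly its global events $\mathcal{E}(X)$, so I set $X=\mathcal{E}(X)$ and take $\mathcal{P}(X)$ as the target Boolean effect algebra. I would then define the candidate morphism $i:A\to\mathcal{P}(X)$ sending each effect, i.e.\ each local outcome, $a$ to the set of global sections that restrict to support it, $i(a)=\{\lambda\in\mathcal{E}(X):\lambda\text{ supports }a\}$. The routine step is to verify that $i$ respects the effect-algebra structure: outcomes that are orthogonal within a context are carried to disjoint subsets, so $i(a\vee b)=i(a)\vee i(b)$ whenever $a\vee b$ is defined, while $i(a^{\bot})=i(a)^{\bot}$, $i(0)=\emptyset$, and $i(1)=X$, using that the global events partition according to each context's test.

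The forward direction then amounts to showing that outcome determinism forces $i$ to be injective. Outcome determinism is precisely the logical distinguishability of outcomes, so two distinct effects cannot be supported by exactly the same set of global sections; hence $i(a)=i(b)$ implies $a=b$. Conversely, given any injective morphism $i:A\hookrightarrow\mathcal{P}(X)$ into a Boolean effect algebra, the atoms of $\mathcal{P}(X)$ serve as deterministic global responses: each such point selects, within every context, a single outcome, and injectivity guarantees that no two outcomes collapse to the same family of points, which is exactly the assertion that outcomes are logically distinguishable. Translating back through the functor, these points are the hidden variables and each is a global event, recovering outcome determinism together with the identification of hidden variables and global events.

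I expect the main obstacle to be the precise matching of injectivity with logical distinguishability across the gluing of contexts: one must check that the support map $a\mapsto i(a)$ is globally well defined, i.e.\ compatible with the restriction maps $\rho'$ and with the non-disturbance gluing, and that injectivity at the level of the whole effect algebra, not merely context by context, coincides with outcome determinism. A further subtlety is that a Boolean effect algebra $\mathcal{P}(X)$ may carry more atoms than there are global sections; I would handle this by restricting $X$ to the atoms lying in the sub-effect-algebra generated by $i(A)$, so that $X$ can be identified with $\mathcal{E}(X)$ exactly. Once this identification is fixed, the equivalence follows from the naturality of the test functor together with the earlier Proposition characterizing hidden variables under outcome determinism.
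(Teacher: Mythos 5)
Your overall route --- translate through the test functor of Ref.~\cite{10.1007/978-3-662-47666-6_32}, identify the per-context Boolean algebras, and invoke the earlier proposition that under non-disturbance and outcome determinism the hidden variables are exactly the global events --- is the same as the paper's, which gives only a two-sentence sketch along precisely these lines. Where you go further is in writing down an explicit candidate morphism, and that is where a genuine gap appears. Taking $X$ to be the set of global sections \emph{of the model} and $i(a)=\{\lambda : \lambda \text{ supports } a\}$ does not yield an injective morphism under outcome determinism: in a deterministic non-contextual model there is a single global section $\lambda_{0}$, so $i$ takes values in $\mathcal{P}(\{\lambda_{0}\})$ and collapses all supported effects from all contexts onto the same singleton; worse, for a strongly contextual model the set of supported global sections is empty, so $i(1)=\emptyset\neq X$ and $i$ is not even a morphism of effect algebras. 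Your claim that ``two distinct effects cannot be supported by exactly the same set of global sections'' is therefore false as stated, and restricting $X$ to the atoms of the subalgebra generated by $i(A)$ does not repair it.

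The repair is to take $X$ to be the full set of global \emph{assignments} $O^{X}=\prod_{x\in X}O_{x}$ --- the canonical hidden-variable set of the Fine--Abramsky--Brandenburger theorem --- and to set $i(S)=\rho'(X,U)^{-1}(S)$ for an effect $S$ living in the context $U$. Surjectivity of the restriction maps of the sheaf $\mathcal{E}$ makes this $i$ injective for \emph{every} model, so injectivity is not where outcome determinism lives; rather, outcome determinism is encoded in the choice of a Boolean (power-set) target together with the requirement that the state extend along $i$, which is the equivalence the paper actually intends. Your converse direction --- each point of $X$ selects exactly one effect from every test, hence a deterministic global response --- is essentially correct and matches the paper's reading, provided you add the check that these per-context selections agree on shared measurements, which is exactly the gluing supplied by the effect-algebra identification of context intersections.
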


In the sheaf approach, we can impose outcome determinism to ask about contextual behavior. It is equivalent, due to Fine–Abramsky–Brandenburger Theorem, to an injective morphism of the induced effect algebra to a Boolean effect algebra that preserves the measures of the model. As every effect algebra induces a measurement scenario, the proposition follows\footnote{We are omitting the $\lambda$-independence, once it is equivalent to non-disturbance and effect algebras are already non-disturbing, thus the use of Boolean effect algebras is equivalent to outcome determinism.}.

Therefore, the natural way to understand contextuality by the effect algebra is external, by immersion in a chosen algebra, and therefore depends on this choice. This is a signal that a direct treatment for this is categorical, where the existence of such an algebra can be formally described. This contrasts with the sheaf approach, where we need to verify the structure internally, without the need to choose any bigger algebra. 

In the end, the two approaches search the same, to verify if a given structure can be understood as a Boolean structure, or in other words, if it can be understood classically in deterministic ways. Contextuality in the standard sense, deterministic and $\lambda$-independent, is the explanation of a set of $R$-measures via a Boolean logical structure, which is equivalent to factorizability of the measures. And in this sense, both frameworks achieve the cohomological characterization of it.

\section{Commentaries}
\label{Commentaries}

The example of a violation of the Figure \ref{example} is solved by the generalized cohomological contextuality presented. The semi-field is $\mathbb{B}$, and we can't use negatives anymore, thus the only extension of the model for the sections $[(da)\to (01)]$ and $[(da)\to (10)]$ shows a non-trivial obstruction. All the violation examples from \cite{Car__2017,caru2018towards} depend on $"Z"$ structures, therefore they are detected by the obstruction of this paper, as expected by the results presented.

For possibilistic models, where the semi-field is the Boolean $\mathbb{B}$, the contextual behavior is codified explicitly through diagrams, that become very complicated already for three measurements in each context, see Ref. \cite{Beer_2018} and diagrams within as examples. Explore the contextual behavior of possibilistic models, using algebraic or diagrammatic methods, is equivalent to write out the possible extensions of all the sections one can have access to, searching for a non-trivial obstruction.

For other semi-fields, the equivalent calculation for the search of obstruction is the idea behind the contextual fraction \cite{Abramsky_2017}. What one is doing trying to show that all the obstructions are trivial is equivalent to show that the matrix equation $Mg=p$ has a solution. Therefore the contextual fraction is just the canonical way to quantify, for semi-fields that contains the interval $[0,1]$ like $\mathbb{R}$ and $\mathbb{R}^{+}$, the obstruction of the model\footnote{See Ref. \cite{Karvonen_2019} for a functorial view of the contextual fraction}.

\newpage
\section{Conclusion}
\label{Conclusion}

This paper generalized the cohomological contextuality to a characterization of the $R$-contextual behavior for any chosen semi-field $R$. We construct a generalized obstruction, the difference cochain, and with it, we generalized the result of the literature. Violations of the cohomological characterization are absent, since there are no negatives, forbidding $"Z"$ structures of the usual cohomological contextuality.

The result is the Čech cohomological characterization of the search for an extension of all the local sections to global ones. A non-trivial obstruction is equivalent to the impossibility of such extension, and the characterization of $R$-contextuality in the standard sense.

A revision of similar construction, but in the framework of effect algebras, is given. The strategies of both approaches are the same, the use of relative cohomology to compare the model with Boolean structures that preserve the measures of the model. Such Boolean structure is equivalent to the imposition in standard contextuality of the outcome determinism property and can be identified with the set of global events once non-disturbance holds.

A categorical construction such as Ref. \cite{Karvonen_2019}, relating effects algebras and empirical models as in Ref. \cite{Wester_2018}, could be used to explore in a clearer way the different notions of contextuality that don't impose outcome determinism \cite{Spekkens_2005,Spekkens_2014}. Even the use of signed measures to represent non-disturbing models and its relation with non-equivalent negatives that appears in literature could be studied \cite{Spekkens_2008}. And the use of algebraic and logical structures raises the question of the relationship with the topos approach, as in Ref. \cite{Doring2019ContextualityAT}. Finally, the formalization in a unified framework could give more formal tools to explore the relation with operational theories \cite{Selby_2021}, and fundamental questions as Darwinism \cite{Zurek2007RelativeSA,Baldijao2020QuantumDA,Baldijao2021NoncontextualityAA}.

\vspace{22pt}

\appendix\section*{Acknowledgements}

The author thanks the MathFoundQ – UNICAMP – Mathematical Foundations of Quantum Theory, a research group of the Instituto de Matemática, Estatística e Computação Científica, in special Prof. Marcelo Terra Cunha, for the conversations in the preparation of this manuscript. Special thanks to Hamadalli Camargo for reading the draft, and for helpful comments and discussions.

Thanks to the anonymous second reviewer of the 18TH INTERNATIONAL CONFERENCE ON QUANTUM PHYSICS AND LOGIC, for its points in the submission of the first preprint version of this work, in special by citing the cohomology of effect algebras.

Financial support from Brazilian agency Coordenação de Aperfeiçoamento de Pessoal de Nível Superior - CAPES is gratefully acknowledged.

\newpage
\nocite{*}
\bibliographystyle{eptcs}
\bibliography{generic}

\begin{thebibliography}{10}
\providecommand{\bibitemdeclare}[2]{}
\providecommand{\surnamestart}{}
\providecommand{\surnameend}{}
\providecommand{\urlprefix}{Available at }
\providecommand{\url}[1]{\texttt{#1}}
\providecommand{\href}[2]{\texttt{#2}}
\providecommand{\urlalt}[2]{\href{#1}{#2}}
\providecommand{\doi}[1]{doi:\urlalt{http://dx.doi.org/#1}{#1}}
\providecommand{\bibinfo}[2]{#2}

\bibitemdeclare{article}{Aasn_ss_2020}
\bibitem{Aasn_ss_2020}
\bibinfo{author}{Sivert \surnamestart Aasnæss\surnameend}
  (\bibinfo{year}{2020}): \emph{\bibinfo{title}{Cohomology and the Algebraic
  Structure of Contextuality in Measurement Based Quantum Computation}}.
\newblock {\sl \bibinfo{journal}{Electronic Proceedings in Theoretical Computer
  Science}} \bibinfo{volume}{318}, p. \bibinfo{pages}{242–253},
  \doi{10.4204/eptcs.318.15}.

\bibitemdeclare{article}{Abramsky_2018}
\bibitem{Abramsky_2018}
\bibinfo{author}{Samson \surnamestart Abramsky\surnameend}
  (\bibinfo{year}{2018}): \emph{\bibinfo{title}{Contextuality: At the Borders
  of Paradox}}.
\newblock {\sl \bibinfo{journal}{Oxford Scholarship Online}},
  \doi{10.1093/oso/9780198748991.003.0011}.

\bibitemdeclare{misc}{abramsky2020classical}
\bibitem{abramsky2020classical}
\bibinfo{author}{Samson \surnamestart Abramsky\surnameend}
  (\bibinfo{year}{2020}): \emph{\bibinfo{title}{Classical logic, classical
  probability, and quantum mechanics}}.
\newblock \bibinfo{howpublished}{Preprint
  \href{https://arxiv.org/abs/2010.13326}{arXiv:2010.13326 [quant-ph]}}.

\bibitemdeclare{inproceedings}{abramsky_et_al:LIPIcs:2021:13439}
\bibitem{abramsky_et_al:LIPIcs:2021:13439}
\bibinfo{author}{Samson \surnamestart Abramsky\surnameend} \&
  \bibinfo{author}{Rui~Soares \surnamestart Barbosa\surnameend}
  (\bibinfo{year}{2021}): \emph{\bibinfo{title}{{The Logic of Contextuality}}}.
\newblock In \bibinfo{editor}{Christel \surnamestart Baier\surnameend} \&
  \bibinfo{editor}{Jean \surnamestart Goubault-Larrecq\surnameend}, editors:
  {\sl \bibinfo{booktitle}{29th EACSL Annual Conference on Computer Science
  Logic (CSL 2021)}}, {\sl \bibinfo{series}{Leibniz International Proceedings
  in Informatics (LIPIcs)}} \bibinfo{volume}{183}, \bibinfo{publisher}{Schloss
  Dagstuhl--Leibniz-Zentrum f{\"u}r Informatik}, \bibinfo{address}{Dagstuhl,
  Germany}, pp. \bibinfo{pages}{5:1--5:18}, \doi{10.4230/LIPIcs.CSL.2021.5}.

\bibitemdeclare{inproceedings}{abramsky_et_al:LIPIcs:2015:5416}
\bibitem{abramsky_et_al:LIPIcs:2015:5416}
\bibinfo{author}{Samson \surnamestart Abramsky\surnameend},
  \bibinfo{author}{Rui~Soares \surnamestart Barbosa\surnameend},
  \bibinfo{author}{Kohei \surnamestart Kishida\surnameend},
  \bibinfo{author}{Raymond \surnamestart Lal\surnameend} \&
  \bibinfo{author}{Shane \surnamestart Mansfield\surnameend}
  (\bibinfo{year}{2015}): \emph{\bibinfo{title}{{Contextuality, Cohomology and
  Paradox}}}.
\newblock In \bibinfo{editor}{Stephan \surnamestart Kreutzer\surnameend},
  editor: {\sl \bibinfo{booktitle}{24th EACSL Annual Conference on Computer
  Science Logic (CSL 2015)}}, {\sl \bibinfo{series}{Leibniz International
  Proceedings in Informatics (LIPIcs)}}~\bibinfo{volume}{41},
  \bibinfo{publisher}{Schloss Dagstuhl--Leibniz-Zentrum fuer Informatik},
  \bibinfo{address}{Dagstuhl, Germany}, pp. \bibinfo{pages}{211--228},
  \doi{10.4230/LIPIcs.CSL.2015.211}.

\bibitemdeclare{article}{Abramsky_2017}
\bibitem{Abramsky_2017}
\bibinfo{author}{Samson \surnamestart Abramsky\surnameend},
  \bibinfo{author}{Rui~Soares \surnamestart Barbosa\surnameend} \&
  \bibinfo{author}{Shane \surnamestart Mansfield\surnameend}
  (\bibinfo{year}{2017}): \emph{\bibinfo{title}{Contextual Fraction as a
  Measure of Contextuality}}.
\newblock {\sl \bibinfo{journal}{Physical Review Letters}}
  \bibinfo{volume}{119}(\bibinfo{number}{5}),
  \doi{10.1103/physrevlett.119.050504}.

\bibitemdeclare{article}{Abramsky_2011}
\bibitem{Abramsky_2011}
\bibinfo{author}{Samson \surnamestart Abramsky\surnameend} \&
  \bibinfo{author}{Adam \surnamestart Brandenburger\surnameend}
  (\bibinfo{year}{2011}): \emph{\bibinfo{title}{The sheaf-theoretic structure
  of non-locality and contextuality}}.
\newblock {\sl \bibinfo{journal}{New Journal of Physics}}
  \bibinfo{volume}{13}(\bibinfo{number}{11}), p. \bibinfo{pages}{113036},
  \doi{10.1088/1367-2630/13/11/113036}.

\bibitemdeclare{article}{Abramsky_2014}
\bibitem{Abramsky_2014}
\bibinfo{author}{Samson \surnamestart Abramsky\surnameend},
  \bibinfo{author}{Adam \surnamestart Brandenburger\surnameend} \&
  \bibinfo{author}{Andrei \surnamestart Savochkin\surnameend}
  (\bibinfo{year}{2014}): \emph{\bibinfo{title}{No-Signalling Is Equivalent To
  Free Choice of Measurements}}.
\newblock {\sl \bibinfo{journal}{Electronic Proceedings in Theoretical Computer
  Science}} \bibinfo{volume}{171}, p. \bibinfo{pages}{1–9},
  \doi{10.4204/eptcs.171.1}.

\bibitemdeclare{article}{Abramsky_2012}
\bibitem{Abramsky_2012}
\bibinfo{author}{Samson \surnamestart Abramsky\surnameend},
  \bibinfo{author}{Shane \surnamestart Mansfield\surnameend} \&
  \bibinfo{author}{Rui~Soares \surnamestart Barbosa\surnameend}
  (\bibinfo{year}{2012}): \emph{\bibinfo{title}{The Cohomology of Non-Locality
  and Contextuality}}.
\newblock {\sl \bibinfo{journal}{Electronic Proceedings in Theoretical Computer
  Science}} \bibinfo{volume}{95}, p. \bibinfo{pages}{1–14},
  \doi{10.4204/eptcs.95.1}.

\bibitemdeclare{article}{Baez_2009}
\bibitem{Baez_2009}
\bibinfo{author}{John~C. \surnamestart Baez\surnameend} \&
  \bibinfo{author}{Michael \surnamestart Shulman\surnameend}
  (\bibinfo{year}{2009}): \emph{\bibinfo{title}{Lectures on N-Categories and
  Cohomology}}.
\newblock {\sl \bibinfo{journal}{The IMA Volumes in Mathematics and its
  Applications}}, p. \bibinfo{pages}{1–68},
  \doi{10.1007/978-1-4419-1524-5\_1}.

\bibitemdeclare{misc}{Baldijao2020QuantumDA}
\bibitem{Baldijao2020QuantumDA}
\bibinfo{author}{Roberto~D. \surnamestart Baldijao\surnameend},
  \bibinfo{author}{Marius \surnamestart Krumm\surnameend},
  \bibinfo{author}{Andrew J.~P. \surnamestart Garner\surnameend} \&
  \bibinfo{author}{Markus~P. \surnamestart Mueller\surnameend}
  (\bibinfo{year}{2020}): \emph{\bibinfo{title}{Quantum Darwinism and the
  spreading of classical information in non-classical theories}}.
\newblock \bibinfo{howpublished}{Preprint
  \href{https://arxiv.org/abs/2012.06559}{arXiv:2012.06559 [quant-ph]}}.

\bibitemdeclare{misc}{Baldijao2021NoncontextualityAA}
\bibitem{Baldijao2021NoncontextualityAA}
\bibinfo{author}{Roberto~D. \surnamestart Baldijão\surnameend},
  \bibinfo{author}{Rafael \surnamestart Wagner\surnameend},
  \bibinfo{author}{Cristhiano \surnamestart Duarte\surnameend},
  \bibinfo{author}{Bárbara \surnamestart Amaral\surnameend} \&
  \bibinfo{author}{Marcelo~Terra \surnamestart Cunha\surnameend}
  (\bibinfo{year}{2021}): \emph{\bibinfo{title}{Noncontextuality as a meaning
  of classicality in Quantum Darwinism}}.
\newblock \bibinfo{howpublished}{Preprint
  \href{https://arxiv.org/abs/2104.05734}{arXiv:2104.05734 [quant-ph]}}.

\bibitemdeclare{misc}{barbosa2019continuousvariable}
\bibitem{barbosa2019continuousvariable}
\bibinfo{author}{Rui~Soares \surnamestart Barbosa\surnameend},
  \bibinfo{author}{Tom \surnamestart Douce\surnameend},
  \bibinfo{author}{Pierre-Emmanuel \surnamestart Emeriau\surnameend},
  \bibinfo{author}{Elham \surnamestart Kashefi\surnameend} \&
  \bibinfo{author}{Shane \surnamestart Mansfield\surnameend}
  (\bibinfo{year}{2019}): \emph{\bibinfo{title}{Continuous-variable nonlocality
  and contextuality}}.
\newblock \bibinfo{howpublished}{Preprint
  \href{https://arxiv.org/abs/1905.08267}{arXiv:1905.08267 [quant-ph]}}.

\bibitemdeclare{article}{Beer_2018}
\bibitem{Beer_2018}
\bibinfo{author}{Kerstin \surnamestart Beer\surnameend} \&
  \bibinfo{author}{Tobias~J. \surnamestart Osborne\surnameend}
  (\bibinfo{year}{2018}): \emph{\bibinfo{title}{Contextuality and bundle
  diagrams}}.
\newblock {\sl \bibinfo{journal}{Physical Review A}}
  \bibinfo{volume}{98}(\bibinfo{number}{5}), \doi{10.1103/physreva.98.052124}.

\bibitemdeclare{article}{Blasiak_2021}
\bibitem{Blasiak_2021}
\bibinfo{author}{Pawel \surnamestart Blasiak\surnameend},
  \bibinfo{author}{Emmanuel~M. \surnamestart Pothos\surnameend},
  \bibinfo{author}{James~M. \surnamestart Yearsley\surnameend},
  \bibinfo{author}{Christoph \surnamestart Gallus\surnameend} \&
  \bibinfo{author}{Ewa \surnamestart Borsuk\surnameend} (\bibinfo{year}{2021}):
  \emph{\bibinfo{title}{Violations of locality and free choice are equivalent
  resources in Bell experiments}}.
\newblock {\sl \bibinfo{journal}{Proceedings of the National Academy of
  Sciences}} \bibinfo{volume}{118}(\bibinfo{number}{17}), p.
  \bibinfo{pages}{e2020569118}, \doi{10.1073/pnas.2020569118}.

\bibitemdeclare{article}{Brandenburger_2008}
\bibitem{Brandenburger_2008}
\bibinfo{author}{Adam \surnamestart Brandenburger\surnameend} \&
  \bibinfo{author}{Noson \surnamestart Yanofsky\surnameend}
  (\bibinfo{year}{2008}): \emph{\bibinfo{title}{A classification of
  hidden-variable properties}}.
\newblock {\sl \bibinfo{journal}{Journal of Physics A: Mathematical and
  Theoretical}} \bibinfo{volume}{41}(\bibinfo{number}{42}), p.
  \bibinfo{pages}{425302}, \doi{10.1088/1751-8113/41/42/425302}.

\bibitemdeclare{article}{camion1971}
\bibitem{camion1971}
\bibinfo{author}{Paul \surnamestart Camion\surnameend}, \bibinfo{author}{L.s
  \surnamestart Levy\surnameend} \& \bibinfo{author}{H.b \surnamestart
  Mann\surnameend} (\bibinfo{year}{1971}): \emph{\bibinfo{title}{Linear
  equations over a commutative ring}}.
\newblock {\sl \bibinfo{journal}{Journal of Algebra}}
  \bibinfo{volume}{18}(\bibinfo{number}{3}), p. \bibinfo{pages}{432–446},
  \doi{10.1016/0021-8693(71)90073-1}.

\bibitemdeclare{article}{Car__2017}
\bibitem{Car__2017}
\bibinfo{author}{Giovanni \surnamestart Carù\surnameend}
  (\bibinfo{year}{2017}): \emph{\bibinfo{title}{On the Cohomology of
  Contextuality}}.
\newblock {\sl \bibinfo{journal}{Electronic Proceedings in Theoretical Computer
  Science}} \bibinfo{volume}{236}, p. \bibinfo{pages}{21–39},
  \doi{10.4204/eptcs.236.2}.

\bibitemdeclare{misc}{caru2018towards}
\bibitem{caru2018towards}
\bibinfo{author}{Giovanni \surnamestart Carù\surnameend}
  (\bibinfo{year}{2018}): \emph{\bibinfo{title}{Towards a complete cohomology
  invariant for non-locality and contextuality}}.
\newblock \bibinfo{howpublished}{Preprint
  \href{https://arxiv.org/abs/1807.04203}{arXiv:1807.04203 [quant-ph]}}.

\bibitemdeclare{article}{Cavalcanti_2018}
\bibitem{Cavalcanti_2018}
\bibinfo{author}{Eric~G. \surnamestart Cavalcanti\surnameend}
  (\bibinfo{year}{2018}): \emph{\bibinfo{title}{Classical Causal Models for
  Bell and Kochen-Specker Inequality Violations Require Fine-Tuning}}.
\newblock {\sl \bibinfo{journal}{Physical Review X}}
  \bibinfo{volume}{8}(\bibinfo{number}{2}), \doi{10.1103/physrevx.8.021018}.

\bibitemdeclare{article}{Dzhafarov_2018}
\bibitem{Dzhafarov_2018}
\bibinfo{author}{Ehtibar \surnamestart Dzhafarov\surnameend} \&
  \bibinfo{author}{Janne \surnamestart Kujala\surnameend}
  (\bibinfo{year}{2018}): \emph{\bibinfo{title}{Contextuality Analysis of the
  Double Slit Experiment(with a Glimpse into Three Slits)}}.
\newblock {\sl \bibinfo{journal}{Entropy}}
  \bibinfo{volume}{20}(\bibinfo{number}{4}), p. \bibinfo{pages}{278},
  \doi{10.3390/e20040278}.

\bibitemdeclare{misc}{Doring2019ContextualityAT}
\bibitem{Doring2019ContextualityAT}
\bibinfo{author}{Andreas \surnamestart Döring\surnameend} \&
  \bibinfo{author}{Markus \surnamestart Frembs\surnameend}
  (\bibinfo{year}{2020}): \emph{\bibinfo{title}{Contextuality and the
  fundamental theorems of quantum mechanics}}.
\newblock \bibinfo{howpublished}{Preprint
  \href{https://arxiv.org/abs/1910.09591}{arXiv:1910.09591 [math-ph]}}.

\bibitemdeclare{article}{Guerini_2017}
\bibitem{Guerini_2017}
\bibinfo{author}{Leonardo \surnamestart Guerini\surnameend},
  \bibinfo{author}{Jessica \surnamestart Bavaresco\surnameend},
  \bibinfo{author}{Marcelo \surnamestart Terra~Cunha\surnameend} \&
  \bibinfo{author}{Antonio \surnamestart Acín\surnameend}
  (\bibinfo{year}{2017}): \emph{\bibinfo{title}{Operational framework for
  quantum measurement simulability}}.
\newblock {\sl \bibinfo{journal}{Journal of Mathematical Physics}}
  \bibinfo{volume}{58}(\bibinfo{number}{9}), p. \bibinfo{pages}{092102},
  \doi{10.1063/1.4994303}.

\bibitemdeclare{article}{hermida1986}
\bibitem{hermida1986}
\bibinfo{author}{J.a \surnamestart Hermida\surnameend} \&
  \bibinfo{author}{T~\surnamestart Sánchez-Giralda\surnameend}
  (\bibinfo{year}{1986}): \emph{\bibinfo{title}{Linear equations over
  commutative rings and determinantal ideals}}.
\newblock {\sl \bibinfo{journal}{Journal of Algebra}}
  \bibinfo{volume}{99}(\bibinfo{number}{1}), p. \bibinfo{pages}{72–79},
  \doi{10.1016/0021-8693(86)90054-2}.

\bibitemdeclare{book}{Johnstone:592033}
\bibitem{Johnstone:592033}
\bibinfo{author}{Peter~T \surnamestart Johnstone\surnameend}
  (\bibinfo{year}{2002}): \emph{\bibinfo{title}{{Sketches of an elephant: a
  Topos theory compendium}}}.
\newblock \bibinfo{series}{Oxford logic guides}, \bibinfo{publisher}{Oxford
  Univ. Press}, \bibinfo{address}{New York, NY}.

\bibitemdeclare{article}{Jun_2017}
\bibitem{Jun_2017}
\bibinfo{author}{Jaiung \surnamestart Jun\surnameend} (\bibinfo{year}{2017}):
  \emph{\bibinfo{title}{Čech cohomology of semiring schemes}}.
\newblock {\sl \bibinfo{journal}{Journal of Algebra}} \bibinfo{volume}{483}, p.
  \bibinfo{pages}{306–328}, \doi{10.1016/j.jalgebra.2017.04.001}.

\bibitemdeclare{article}{Karvonen_2019}
\bibitem{Karvonen_2019}
\bibinfo{author}{Martti \surnamestart Karvonen\surnameend}
  (\bibinfo{year}{2019}): \emph{\bibinfo{title}{Categories of Empirical
  Models}}.
\newblock {\sl \bibinfo{journal}{Electronic Proceedings in Theoretical Computer
  Science}} \bibinfo{volume}{287}, p. \bibinfo{pages}{239–252},
  \doi{10.4204/eptcs.287.14}.

\bibitemdeclare{misc}{Sidiney_2021}
\bibitem{Sidiney_2021}
\bibinfo{author}{Sidiney~B. \surnamestart Montanhano\surnameend}
  (\bibinfo{year}{2021}): \emph{\bibinfo{title}{Contextuality in the Fibration
  Approach and the Role of Holonomy}}.
\newblock \bibinfo{howpublished}{Preprint
  \href{https://arxiv.org/abs/2105.14132}{arXiv:2105.14132 [quant-ph]}}.

\bibitemdeclare{article}{Okay_2020}
\bibitem{Okay_2020}
\bibinfo{author}{Cihan \surnamestart Okay\surnameend} \&
  \bibinfo{author}{Robert \surnamestart Raussendorf\surnameend}
  (\bibinfo{year}{2020}): \emph{\bibinfo{title}{Homotopical approach to quantum
  contextuality}}.
\newblock {\sl \bibinfo{journal}{Quantum}} \bibinfo{volume}{4}, p.
  \bibinfo{pages}{217}, \doi{10.22331/q-2020-01-05-217}.

\bibitemdeclare{article}{Patchkoria2014CohomologyMO}
\bibitem{Patchkoria2014CohomologyMO}
\bibinfo{author}{A.~\surnamestart Patchkoria\surnameend}
  (\bibinfo{year}{2014}): \emph{\bibinfo{title}{Cohomology monoids of monoids
  with coefficients in semimodules I}}.
\newblock {\sl \bibinfo{journal}{Journal of Homotopy and Related Structures}}
  \bibinfo{volume}{9}, pp. \bibinfo{pages}{239--255},
  \doi{10.1007/s40062-014-0075-6}.

\bibitemdeclare{misc}{patchkoria2006exactness}
\bibitem{patchkoria2006exactness}
\bibinfo{author}{Alex \surnamestart Patchkoria\surnameend}
  (\bibinfo{year}{2006}): \emph{\bibinfo{title}{On exactness of long sequences
  of homology semimodules}}.
\newblock \bibinfo{howpublished}{Preprint
  \href{https://arxiv.org/abs/math/0610117}{arXiv:math/0610117 [math.KT]}}.

\bibitemdeclare{misc}{Patchkoria2017CohomologyMO}
\bibitem{Patchkoria2017CohomologyMO}
\bibinfo{author}{Alex \surnamestart Patchkoria\surnameend}
  (\bibinfo{year}{2017}): \emph{\bibinfo{title}{Cohomology monoids of monoids
  with coefficients in semimodules II}}.
\newblock \bibinfo{howpublished}{Preprint
  \href{https://arxiv.org/abs/1703.09262}{arXiv:1703.09262 [math.KT]}}.

\bibitemdeclare{article}{Roumen_2017}
\bibitem{Roumen_2017}
\bibinfo{author}{Frank \surnamestart Roumen\surnameend} (\bibinfo{year}{2017}):
  \emph{\bibinfo{title}{Cohomology of Effect Algebras}}.
\newblock {\sl \bibinfo{journal}{Electronic Proceedings in Theoretical Computer
  Science}} \bibinfo{volume}{236}, p. \bibinfo{pages}{174–201},
  \doi{10.4204/eptcs.236.12}.

\bibitemdeclare{article}{Selby_2021}
\bibitem{Selby_2021}
\bibinfo{author}{John~H. \surnamestart Selby\surnameend},
  \bibinfo{author}{Carlo~Maria \surnamestart Scandolo\surnameend} \&
  \bibinfo{author}{Bob \surnamestart Coecke\surnameend} (\bibinfo{year}{2021}):
  \emph{\bibinfo{title}{Reconstructing quantum theory from diagrammatic
  postulates}}.
\newblock {\sl \bibinfo{journal}{Quantum}} \bibinfo{volume}{5}, p.
  \bibinfo{pages}{445}, \doi{10.22331/q-2021-04-28-445}.

\bibitemdeclare{article}{Spekkens_2005}
\bibitem{Spekkens_2005}
\bibinfo{author}{R.~W. \surnamestart Spekkens\surnameend}
  (\bibinfo{year}{2005}): \emph{\bibinfo{title}{Contextuality for preparations,
  transformations, and unsharp measurements}}.
\newblock {\sl \bibinfo{journal}{Physical Review A}}
  \bibinfo{volume}{71}(\bibinfo{number}{5}), \doi{10.1103/physreva.71.052108}.

\bibitemdeclare{article}{Spekkens_2008}
\bibitem{Spekkens_2008}
\bibinfo{author}{Robert~W. \surnamestart Spekkens\surnameend}
  (\bibinfo{year}{2008}): \emph{\bibinfo{title}{Negativity and Contextuality
  are Equivalent Notions of Nonclassicality}}.
\newblock {\sl \bibinfo{journal}{Physical Review Letters}}
  \bibinfo{volume}{101}(\bibinfo{number}{2}),
  \doi{10.1103/physrevlett.101.020401}.

\bibitemdeclare{article}{Spekkens_2014}
\bibitem{Spekkens_2014}
\bibinfo{author}{Robert~W. \surnamestart Spekkens\surnameend}
  (\bibinfo{year}{2014}): \emph{\bibinfo{title}{The Status of Determinism in
  Proofs of the Impossibility of a Noncontextual Model of Quantum Theory}}.
\newblock {\sl \bibinfo{journal}{Foundations of Physics}}
  \bibinfo{volume}{44}(\bibinfo{number}{11}), p. \bibinfo{pages}{1125–1155},
  \doi{10.1007/s10701-014-9833-x}.

\bibitemdeclare{inproceedings}{10.1007/978-3-662-47666-6_32}
\bibitem{10.1007/978-3-662-47666-6_32}
\bibinfo{author}{Sam \surnamestart Staton\surnameend} \&
  \bibinfo{author}{Sander \surnamestart Uijlen\surnameend}
  (\bibinfo{year}{2015}): \emph{\bibinfo{title}{Effect Algebras, Presheaves,
  Non-locality and Contextuality}}.
\newblock In \bibinfo{editor}{Magn{\'u}s~M. \surnamestart
  Halld{\'o}rsson\surnameend}, \bibinfo{editor}{Kazuo \surnamestart
  Iwama\surnameend}, \bibinfo{editor}{Naoki \surnamestart Kobayashi\surnameend}
  \& \bibinfo{editor}{Bettina \surnamestart Speckmann\surnameend}, editors:
  {\sl \bibinfo{booktitle}{Automata, Languages, and Programming}},
  \bibinfo{publisher}{Springer Berlin Heidelberg}, \bibinfo{address}{Berlin,
  Heidelberg}, pp. \bibinfo{pages}{401--413}.

\bibitemdeclare{article}{Wester_2018}
\bibitem{Wester_2018}
\bibinfo{author}{Linde \surnamestart Wester\surnameend} (\bibinfo{year}{2018}):
  \emph{\bibinfo{title}{Almost Equivalent Paradigms of Contextuality}}.
\newblock {\sl \bibinfo{journal}{Electronic Proceedings in Theoretical Computer
  Science}} \bibinfo{volume}{266}, p. \bibinfo{pages}{1–22},
  \doi{10.4204/eptcs.266.1}.

\bibitemdeclare{misc}{Zurek2007RelativeSA}
\bibitem{Zurek2007RelativeSA}
\bibinfo{author}{Wojciech~Hubert \surnamestart Zurek\surnameend}
  (\bibinfo{year}{2007}): \emph{\bibinfo{title}{Relative States and the
  Environment: Einselection, Envariance, Quantum Darwinism, and the Existential
  Interpretation}}.
\newblock \bibinfo{howpublished}{Preprint
  \href{https://arxiv.org/abs/0707.2832}{arXiv:0707.2832 [quant-ph]}}.

\end{thebibliography}
\end{document}